\documentclass[11pt]{amsart}
\usepackage[letterpaper,top=2cm,bottom=2cm,left=2cm,right=2cm]{geometry}

\usepackage[colorlinks=true,linkcolor=blue,urlcolor=blue,citecolor=teal]{hyperref}
\usepackage{graphicx}
\usepackage{caption}
\usepackage{bigstrut}
\usepackage{mathtools}
\usepackage{verbatim}
\usepackage{tikz}
\usepackage[all]{xy}
\usepackage[english]{babel}

\usepackage{cleveref} 
\usepackage{graphicx}
\usepackage{wasysym}
\usepackage{tikz-cd}
\usepackage{nicematrix}
\usepackage{amssymb}

%\usetikzlibrary{matrix}
\usetikzlibrary{arrows}
\usepackage{algorithm}
\usepackage{shuffle}
\usepackage[normalem]{ulem}
%%%%%%%%%%%%%%%%%%%%%%%%%%%%%%%%%%%%
                  %

\usepackage{mathdots} %serve per antidiagonal dots

\usepackage{graphicx}
\usepackage{subcaption}
\usepackage{array}

\usepackage{comment}

\usepackage{enumitem}

%%%%%%%%%%%%%%%%%%%%%%%%%

\newtheorem{theorem}{Theorem}
\newtheorem*{theorem*}{Theorem}
\numberwithin{theorem}{section}
\newtheorem{proposition}[theorem]{Proposition}
\newtheorem{lemma}[theorem]{Lemma}
\newtheorem{corollary}[theorem]{Corollary}
\newtheorem*{corollary*}{Corollary}

\theoremstyle{definition}
\newtheorem{definition}[theorem]{Definition}
\newtheorem{notation}[theorem]{Notation}
\newtheorem{remark}[theorem]{Remark}
\newtheorem{example}[theorem]{Example}
\newtheorem*{claim*}{\indent Claim}

\newcommand{\PP}{\mathbb{P}}

\newcommand{\CC}{\mathbb{C}}

\newcommand{\NN}{\mathbb{N}}

\newcommand{\rk}{\mathop{\rm rk}\nolimits}

\newcommand{\sym}{\mathop{\rm Sym}\nolimits}
\newcommand{\spann}{\mathop{\textrm{span}}}
\DeclareMathOperator{\Cat}{Cat}

\newcommand{\B}{\mathcal{B}}
\newcommand{\A}{\mathcal{A}}

\newcommand{\Cc}{\mathcal{C}}

\newcommand{\bfd}{\mathbf{d}}

\newcommand{\bfi}{\mathbf{i}}
\newcommand{\bfj}{\mathbf{j}}
\newcommand{\bft}{\mathbf{t}}

\title{ A new bound on the rank of tensor product of $W$-states}
\author{Stefano Canino \and Alex Casarotti \and Pierpaola Santarsiero}

\newcommand{\Addresses}{{% additional braces for segregating \footnotesize
  \bigskip
  \footnotesize

  \textsc{Stefano Canino, Universit\`a di Trento, Dipartimento di Matematica, Via Sommarive 14, 38123 Povo, Trento, Italy }\par\nopagebreak
  \textit{E-mail address}: \email{stefano.canino@unitn.it}

  \medskip

\textsc{Alex Casarotti, Universit\`a di Ferrara, Dipartimento di Matematica e Informatica, Via Nicol\`o Machiavelli 30, 44121 Ferrara, Italy}\par\nopagebreak
  \textit{E-mail address}: \email{alex.casarotti@unife.it}

  \medskip
  \textsc{Pierpaola Santarsiero, Dipartimento di Ingegneria Industriale e Scienze Matematiche, Universit\`a Politecnica delle Marche, Via Brecce Bianche
I-60131 Ancona, Italy}\par\nopagebreak
  \textit{E-mail address}: \email{p.santarsiero@staff.univpm.it}
}}

\begin{document}

\begin{abstract}
A $W$-state is an order $d$ symmetric tensor of the form $W_d=x^{d-1}y$. We prove that the partially symmetric rank of $W_{d_1}\otimes \cdots \otimes W_{d_k} $ is at most $2^{k-1}(d_1+\cdots +d_k-2k+2)$. The same bound holds for the tensor rank and it is an improvement of $2^k(k-1)$ over the best known bound. Moreover, we provide an explicit partially symmetric decomposition achieving this bound.
\end{abstract}
\maketitle

\section{Introduction}
Computing the rank of a tensor is generally NP-hard, see \cite{Has90, rankNPhard}. For this reason, one often focuses on special families of tensors that serve as meaningful benchmarks either because they originate from applications, or because they exhibit particularly interesting behavior. One such family is the tensor product of generalized $W$-state: $$W_{d_1}\otimes \cdots \otimes W_{d_k}\in \underbrace{\CC^2\otimes \cdots \otimes \CC^2}_{d_1+\cdots +d_k-\text{times}}
$$ 
where each $W$-state $W_d$ is defined as
$$
W_d=\sum_{i=1}^d e^{(1)}_{1}\otimes \cdots \otimes e^{(i-1)}_1\otimes e^{(i)}_2 \otimes e^{(i+1)}_1\otimes \cdots \otimes e^{(d)}_1 \in (\mathbb{C}^2)^{\otimes d}, \quad \spann\{e^{(i)}_1,e^{(i)}_2 \}\cong \CC^2.
$$ 
Focusing for a moment on a single copy, the generalized $W$-state itself is a highly interesting tensor. It is a symmetric tensor and one of the simplest examples illustrating the non-semicontinuity of tensor rank. Moreover, the expression of the general element of the tangential variety $\tau(\nu((\PP^1)^{\times d}))$ of the Segre image $\nu((\PP^1)^{\times d})$ is $W_d$. This is because actually $W_d\in \langle \nu(Z)\rangle$, where $Z\subset (\PP^1)^{\times d}$ is a zero-dimensional scheme of length 2 supported at $\otimes_{i=1}^d e^{(i)}_{1} $, called a 2-jet \cite{alexander1997interpolation}, so actually $W_d$ lies on a line that is tangent to $\nu((\PP^1)^{\times d})$ at $\otimes_{i=1}^d e^{(i)}_{1} $. The $W$-state is a rank-$d$ tensor and it is an example of tensor for which tensor rank and symmetric tensor rank coincide \cite{rankTangentialBB}. It has infinitely many decompositions computing its rank and in particular, one can choose any elementary tensor except for $\otimes_{i=1}^d e^{(i)}_{1}$ to be part of one of its minimal (symmetric) rank decomposition \cite{carlini2017waring, BOS}. 

 From the point of view of complexity theory, $W$-states are related to the so-called Coppersmith-Winogard tensors \cite{coppersmith1987matrix}, which are tensors of interest in the study of the matrix multiplication tensor. In particular, $W$-states are the outer structure of many tensors, including Strassen tensor and truncated polynomial multiplication, used in the so-called \emph{laser method}, a strategy introduced by V. Strassen in \cite{Str86} for computing the complexity of matrix multiplication.

In the three-factors case $W_3$ plays a historical central role in quantum information theory, being one of the two fundamental classes of genuine tripartite entanglement \cite{dur2000three}, and this is where the name \emph{$W$-state} comes from. This makes the $W$-state a structurally very rich object at the intersection of geometry, algebraic complexity, and quantum information.
 
 Given how special $W_d$ is, it is natural to expect that the tensor product of several copies, $W_{d_1}\otimes \cdots \otimes W_{d_k} $, is also quite special. This expectation is reinforced for instance by the role that the tensor product of two copies of $W$-states had in the study of the multiplicativity of tensor rank under tensor product: indeed, \cite{CJZ} proved that the rank of $W_3\otimes W_3$ is strictly less than the naive multiplicative guess $3\cdot 3$. This was the first explicit example showing that the rank of the tensor product of two tensors is not the product of the two ranks. Shortly after, \cite{CF18} proved that the rank of  $W_3\otimes W_3$ is 8. A systematic approach to the strict submulticativity property can be found in \cite{BBGOV}, while, motivated by applications in complexity theory and quantum information theory, the submultiplicativity of the Kronecker tensor product of many copies of $W$-states has been studied for instance in \cite{chen2010tensor, zuiddam2017note}. Some Kronecker products of $W$-states show also other interesting connections. For instance, $W_3\boxtimes W_3$ is a symmetric tensor corresponding to the cubic hypersurface of $\PP^3$ made of a quadric and a tangent hyperplane and, by \cite{Seg42}, this is the only quaternary cubic having maximal rank 7. As a second example, a particular projection of $W_3\boxtimes W_3\boxtimes W_3$ is the so-called Bjorklund-Kaski tensor, which appears in \cite{BK24} and is used for proving that the asymptotic rank conjecture \cite{Str94} and the set cover conjecture \cite{CFKLMMPS15, KT19} cannot be both true.

 Hence, determining the rank of $W_{d_1}\otimes \cdots \otimes W_{d_k}$ becomes a particularly meaningful problem. \cite{BBCG} gave the first bounds on the rank of the tensor product of an arbitrary number of $W$-states 
 and these remain the best known results to date, with the notable exception of the case $k=2$ where the bound was later improved in \cite[Theorem 1.8(i)]{gal}. 

  In this paper, we make progress along this direction by providing a new sharp upper bound on the rank of tensor products of generalized $W$-states $W_{d_1}\otimes \cdots \otimes W_{d_k}\in \bigotimes_{i=1}^k(\CC^{2})^{\otimes d_i}$. We remark that $W_{d_1}\otimes \cdots \otimes W_{d_k}$ is a partially symmetric tensor and for a partially symmetric tensor $T\in \sym^{d_1}\CC^{n_1}\otimes \cdots \otimes \sym^{d_k}\CC^{n_k}$ the partially symmetric rank of $T$, denoted as $\rk_{\bfd}(T)$, is the minimum integer $r$ such that $T=\sum_{i=1}^r (v^{(1)}_i)^{\otimes d_1}\otimes \cdots \otimes (v^{(k)}_i)^{\otimes d_k}$, where $v^{(j)}_i\in \CC^{n_j}$. Our main result is the following.

\begin{theorem}\label{theorem: bound rango}
For every $k\geq 2$ and every $d_1,\dots,d_k\geq 3$ we have
$$
\rk_{\bfd}(W_{d_1}\otimes \cdots \otimes W_{d_k})\leq  2^{k-1}\left(d_1+\cdots+d_k-2k+2\right).
$$
\end{theorem}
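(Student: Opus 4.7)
The plan is to construct an explicit partially symmetric decomposition of $W_{d_1} \otimes \cdots \otimes W_{d_k}$ achieving the target rank $2^{k-1}(d_1 + \cdots + d_k - 2k + 2)$. Rewriting this as $2^k + 2^{k-1}\sum_{i=1}^k(d_i - 2)$ suggests a decomposition structured as a ``main block'' of $2^k$ signed rank-one summands plus, for each factor $i$, a ``correction block'' of $2^{k-1}(d_i - 2)$ additional summands, for a total matching the bound.

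The first step is to split each factor as $W_{d_i} = M_{d_i} + E_{d_i}$, where $M_{d_i} = \alpha_i(x_i+y_i)^{d_i} + \beta_i(x_i-y_i)^{d_i}$ is a rank-$2$ signed tangent pair and $E_{d_i}$ is a rank-$(d_i - 2)$ symmetric tensor supported on $d_i - 2$ other points of $\PP^1$; such a splitting exists because any of the infinitely many minimal rank-$d_i$ decompositions of $W_{d_i}$ noted in the introduction can be partitioned this way, with considerable freedom in the support of $E_{d_i}$. Expanding $\bigotimes_i(M_{d_i} + E_{d_i})$ distributively and indexing the contributions by subsets $S \subseteq \{1, \ldots, k\}$, the $S = \emptyset$ block supplies exactly the $2^k$ main summands $\bigotimes_i(x_i + \varepsilon_i y_i)^{d_i}$ for $\varepsilon \in \{\pm 1\}^k$, and the $|S| = 1$ blocks $E_{d_i} \otimes \bigotimes_{j \neq i} M_{d_j}$ contribute $\sum_i 2^{k-1}(d_i - 2)$ correction summands by subadditivity of rank under tensor products.

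The main obstacle is controlling the $|S| \geq 2$ cross blocks $\bigotimes_{i \in S} E_{d_i} \otimes \bigotimes_{j \notin S} M_{d_j}$, whose naive ranks multiply and would ruin the linear dependence on the $d_i$; indeed, even for $W_3 \otimes W_3$, the naive inclusion--exclusion yields $9$ summands versus the known value $8$. I would overcome this by exploiting the freedom in the support of each $E_{d_i}$, choosing the $d_i - 2$ points from a coherent parameter family across the factors (say, roots of a common polynomial or points coordinated via a single scalar) so that the higher-$|S|$ cross blocks become linearly dependent on, or can be absorbed into, the $|S| \leq 1$ blocks. Establishing the required combinatorial identity in the monomial basis of $\bigotimes_i \sym^{d_i}\CC^2$ is the technical heart of the proof. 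As a complementary strategy, if the inclusion--exclusion route becomes unwieldy, one may instead parametrize the decomposition directly by sign patterns $\varepsilon \in \{\pm 1\}^{k-1}$ (explaining the $2^{k-1}$ prefactor as a reduction by a global sign flip), allocating $d_1 + \cdots + d_k - 2k + 2$ summands per pattern, and solve for the parameters by matching bigraded coefficients against $W_{d_1} \otimes \cdots \otimes W_{d_k}$.
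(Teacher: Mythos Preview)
Your primary route has a real gap exactly where you flag it. The identity $\bigotimes_i W_{d_i}=\sum_{S}\bigotimes_{i\in S}E_{d_i}\otimes\bigotimes_{j\notin S}M_{d_j}$ is only a rewriting; bounding the ranks of the $|S|\le 1$ blocks by subadditivity gives an upper bound for \emph{their} partial sum, not for $W_{d_1}\otimes\cdots\otimes W_{d_k}$, unless the $|S|\ge 2$ contribution vanishes. It does not: each $E_{d_i}\neq 0$ once $d_i\ge 3$, so every higher cross block is a fixed nonzero tensor. The freedom you invoke---moving the $d_i-2$ support points---changes which tensors $M_{d_i},E_{d_i}$ you have chosen, but for every choice the higher blocks remain nonzero, and ``absorbing'' them into the lower ones amounts to exhibiting a single length-$2^{k-1}(d-2k+2)$ decomposition of $W$, which is the problem itself. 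No concrete mechanism (identity, cancellation, shared support) is proposed; already at $k=2$ the extra block $E_{d_1}\otimes E_{d_2}$ carries rank up to $(d_1-2)(d_2-2)$, and nothing here explains even the drop from $9$ to $8$ for $W_3\otimes W_3$.

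Your fallback---indexing by $\varepsilon\in\{\pm1\}^{k-1}$ and allotting $d-2k+2$ summands per pattern---is in fact the shape of the paper's proof, but the paper supplies the structure you leave open. For each $J\subseteq[2,k]$ one takes the $(1,\dots,1)$-line $L^J\subset(\PP^1)^{\times k}$ through the support of the scheme $Z_k$; its Segre--Veronese image $\mathcal C_{\bfd}^J$ is a rational normal curve of degree $d$, and since $Z_k\subset\bigcup_J L^J$ one obtains $\prod_j d_j\cdot W_{d_1}\otimes\cdots\otimes W_{d_k}=\sum_J T^J$ with $T^J\in\spann(\mathcal C_{\bfd}^J)$. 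The substantive work is then (i) solving the resulting linear system for the $T^J$ (Proposition~\ref{prop: mortale}), which forces most coefficients to zero but leaves free parameters, and (ii) showing via Sylvester's algorithm that those parameters can be chosen so that the degree-$d$ binary form $\varphi^J(T^J)$ corresponding to each $T^J$ has Waring rank exactly $d-2k+2$ (Proposition~\ref{prop: rango TJ}). Summing over the $2^{k-1}$ values of $J$ gives the bound. So the ``coherent parameter family'' you were reaching for is realised by the diagonal lines $L^J$, and the technical heart is not a combinatorial cancellation in the monomial basis but an explicit catalecticant computation for a specific family of binary forms.
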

\Cref{theorem: bound rango} improves \cite[Theorem 3.6]{BBCG} by $2^k(k-1)$. For $k=2$ it coincides with the bound provided in \cite[Theorem 1.8(i)]{gal} and when all $d_i=3$ it matches the bound of \cite[Theorem 3.3]{BBCG}. The bound of \Cref{theorem: bound rango} is sharp as for $k=2$ and $d_1=d_2=3$ it gives rank 8.

 In addition to providing a theoretical upper bound on the tensor rank, \Cref{theorem: bound rango} is constructive. Indeed, the proof yields an explicit decomposition of length equal to the bound that can be achieved by computing the rank decomposition of essentially one binary form (see \Cref{algo}). This decreases a lot the computational cost for computing a partially symmetric decomposition of $W_{d_1}\otimes\cdots\otimes W_{d_k}$; see the forthcoming \Cref{remark: commento algo}.
 
 As an immediate consequence the same bound holds also for the rank of tensor product of $W$-states. \begin{corollary}\label{corollary: bound rango vero}
Let $k\geq 2$, fix positive integers $d_i\geq 3$ for $i=1,\dots,k$. 
The rank of $W_{d_1}\otimes \cdots \otimes W_{d_k}\in \bigotimes_{i=1}^k(\CC^{2})^{\otimes d_i}$ is at most $2^{k-1}(d_1+\dots+d_k-2k+2)$.
\end{corollary}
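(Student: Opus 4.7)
The plan is to derive the corollary directly from \Cref{theorem: bound rango} via the general inequality
$$
\rk(T)\leq \rk_{\bfd}(T)
$$
that holds for any partially symmetric tensor $T\in\sym^{d_1}\CC^{n_1}\otimes\cdots\otimes\sym^{d_k}\CC^{n_k}\subseteq(\CC^{n_1})^{\otimes d_1}\otimes\cdots\otimes(\CC^{n_k})^{\otimes d_k}$. Once this inequality is in hand, applying \Cref{theorem: bound rango} to $T=W_{d_1}\otimes\cdots\otimes W_{d_k}$ yields exactly the claimed upper bound, since the corollary is stated with the same numerical quantity as the theorem.

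To justify the inequality, I would just unpack the definition of partially symmetric rank. Suppose
$$
W_{d_1}\otimes\cdots\otimes W_{d_k}=\sum_{i=1}^{r}(v^{(1)}_i)^{\otimes d_1}\otimes\cdots\otimes(v^{(k)}_i)^{\otimes d_k}
$$
with $r=\rk_{\bfd}(W_{d_1}\otimes\cdots\otimes W_{d_k})$ and $v^{(j)}_i\in\CC^{2}$. Each factor $(v^{(j)}_i)^{\otimes d_j}$ is a rank-one elementary tensor in $(\CC^{2})^{\otimes d_j}$, and the outer tensor product of rank-one tensors is again a rank-one tensor in the combined space $(\CC^{2})^{\otimes(d_1+\cdots+d_k)}$. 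Hence the displayed decomposition, read as a sum in the full tensor product space, is an ordinary rank decomposition of length $r$, so $\rk(W_{d_1}\otimes\cdots\otimes W_{d_k})\leq r$.

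There is essentially no obstacle: the work is entirely contained in \Cref{theorem: bound rango}, and the corollary is a one-line consequence of the fact that a partially symmetric decomposition is, in particular, a valid decomposition into rank-one tensors. The only thing to double-check is that the constructive nature of the theorem is preserved: the explicit partially symmetric decomposition produced in the proof of \Cref{theorem: bound rango} simultaneously witnesses the bound for the ordinary tensor rank.
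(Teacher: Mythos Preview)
Your proposal is correct and follows exactly the same approach as the paper: the corollary is deduced from \Cref{theorem: bound rango} via the inequality $\rk(T)\leq\rk_{\bfd}(T)$, which the paper states in \Cref{section: preliminaries} without further justification. Your added unpacking of why a partially symmetric decomposition is in particular a rank decomposition is a harmless elaboration of what the paper treats as evident.
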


Our methods to achieve this result are geometric and, as in \cite[Theorem 3.6]{BBCG}, they rely on finding a curve in $\mathcal{C}\subset (\PP^1)^{\times k}$ such that the span of the image of $\Cc$ under the Segre-Veronese embedding contains $W_{d_1}\otimes \cdots \otimes W_{d_k}$. 

\subsection*{Outline of the paper}  We set up our notation and give standard preliminaries in \Cref{section: preliminaries}, while \Cref{section: solving the system} contains the main technical results to get our bounds. We start \Cref{section: computing the bounds} by briefly reviewing the theory of catalecticants needed to compute the rank of a family of binary homogeneous polynomials (\Cref{prop: rango TJ}). Then, we prove our main \Cref{theorem: bound rango} and in \Cref{algo} we give a recipe to explicitly compute a partially symmetric decomposition of length given by our bound. \Cref{theorem: bound rango bordo} computes the border partially symmetric rank of $W_{d_1}\otimes \cdots \otimes W_{d_k}$. We conclude with a detailed comment on the method in \Cref{subsection: commenti sul bound}.

\subsection*{Acknowledgements}
The authors are grateful to Fulvio Gesmundo, Alessandro Gimigliano and Joachim Jelisiejew for interesting conversations on the topic.

S. Canino has been funded by the Italian Ministry of
University and Research in the framework of the Call for Proposals for
scrolling of final rankings of the PRIN 2022 call - Protocol no.
2022NBN7TL.
A. Casarotti has been funded by the European Union under the project NextGenerationEU. PRIN 2022, CUP: F53D23002600006.
P. Santarsiero was supported by the European Union under NextGenerationEU. PRIN 2022, Prot. 2022E2Z4AK and PRIN 2022 SC-CUP: I53C24002240006.

\section{Preliminary notions}\label{section: preliminaries}
We work over $\CC$. In this part we set up our notation and recall preliminary notions used throughout the text. We refer for instance to \cite{Lands} and \cite{guida} for a more detailed account.

Let $X\subset \PP^N$ be an irreducible non-degenerate projective variety. We denote by $$\sigma_r^\circ(X)=\bigcup_{p_1,\dots,p_r\in X}\langle p_1,\dots,p_r\rangle$$ and the $r$-th secant variety of $X$, denoted as $\sigma_r(X)$, is the Zariski closure in $\PP^N$ of $\sigma_r^\circ(X)$.  Secant varieties are auxiliary varieties needed to study the concept of $X$-rank with respect to a variety $X$.
\begin{definition}
    Let $X\subset \PP^N$ be irreducible and non-degenerate. The \emph{$X$-rank} of a $T\in \PP^N$, denoted as $\rk_X(T)$, is the minimum integer $r$ such that $T\in \langle p_1,\dots,p_r \rangle$, for $p_1,\dots,p_r\in X$. The border $X$-rank of $ T$, denoted as $\underline{\rk}_X(T)$ is the minimum integer $r$ such that $T\in \sigma_r(X)$.
\end{definition}
Fix a positive integer $k\geq 1$ and let us consider the multiprojective space $\PP^1\times \cdots \times \PP^1$ given by $k$ copies of $\PP^1$ with coordinates $[x_{1,0},x_{1,1};\ldots;x_{k,0},x_{k,1} ]$. Let $\bfd=(d_1,\dots,d_k)$ for positive integers $d_1,\dots,d_k$, and denote by
$$sv_{\boldsymbol{d}}:\PP^1\times\dots\times\PP^1\to \PP^{N_{\boldsymbol{d}}}=\PP(\sym^{d_1}\CC^2\otimes \cdots \otimes \sym^{d_k}\CC^2)$$
the Segre-Veronese embedding of $\PP^1\times\dots\times\PP^1$ with multidegree ${\bf d}$, where $N_{\boldsymbol{d}}=\prod_{i=1}^k(d_i+1)-1$.
We will use the following notation.
\begin{itemize}
    \item The image of $sv_{\boldsymbol{d}}$ is the Segre-Veronese variety $SV^\bfd\subset \PP^{N_{\bf d}}$ of multidegree $\bfd$ and the $SV^\bfd$-rank of a $T\in \PP^{N_{\bfd}}$ is called the \emph{partially symmetric rank} of $T$ and is denoted as $\rk_{\bfd}(T)$. The border $X$-rank is denoted as $\underline{\rk}_{\bf d}(\cdot)$.
    \item When $d_i=1$ for all $i=1,\dots,k$ the corresponding variety is a Segre variety $\mathrm{Seg}_{1^k}\subset \PP^{N_{\bfd}}$ where $N_{\bf d}=2^k-1$ and the $\mathrm{Seg}_{1^k}$-rank of a $T\in \PP^{N_\bfd}$ is called the \emph{rank} of $T$ and is denoted as $\rk(T)$. 
    \item Lastly, if $k=1$ and $d_1=d$ for some positive integer $d$, then $sv_{\boldsymbol{d}}((\PP^1)^{\times k})$ is the degree $d$ rational normal curve $\mathcal{C}_{d}\subset \PP^{N_{\bfd}}$, where $N_{\bf d}=\binom{n+d}{d}-1$ and the $\mathcal{C}_{d}$-rank of a tensor $T\in \PP^{N_{\bfd}}$ is called the \emph{symmetric (or Waring) rank} of $T$ and is denoted as $\rk_{s}(T)$, while its border rank is denoted as $\underline{\rk}_s(T)$.
\end{itemize}
Clearly, for a $T\in \PP(\sym^{d_1}\CC^2\otimes \cdots \otimes \sym^{d_k}\CC^2)$ we have $ \rk(T)\leq \rk_{\bfd }(T)$.

In the following we will work on the multiprojective $(\PP^1)^{\times k}$ with $k\geq 2$. We consider the $i$-th $\mathbb{P}^1$ as a space of binary linear forms  and we consider $\mathbb{P}^{N_{\mathbf{d}}}$ as the space of multilinear forms of multidegree $\bfd$ and total degree $d\coloneqq d_1+\dots+d_k$. In $\mathbb{P}^{N_{\mathbf{d}}}$ we use coordinates 
$$z_{\bfi}=z_{i_1,\dots,i_k}=\prod_{r=1}^k\binom{d_r}{i_r}x_{1,0}^{i_1}x_{1,1}^{d_1-i_1}\otimes \cdots \otimes x_{k,0}^{i_k}x_{k,1}^{d_k-i_k}$$ ordered via graded lexicographic order. 

\begin{notation}
%In particular, t
The coordinates of $\PP^{N_{\bf d}}$ are labelled via the set
$\A_{\bfd}\coloneqq\{(i_1,\dots,i_k)\;|\; 0\leq i_r\leq d_r\}$.
Moreover, for any $s\in\NN$ with $0\leq s\leq d$ we set
$\A_{\bfd,s}\coloneqq\{(i_1,\dots,i_k)\in\A_{\bf d}\;|\; i_1+\dots+i_k=s\}.$
In $\A_\bfd$ we use the same ordering of the coordinates $z_{i_1,\dots,i_k}$. 

Given non negative integers $a,b$ with $a\leq b$, we denote by $[a,b]=\{a,a+1,\dots,b \}$.
\end{notation}

Since it will be useful later, we define a simple operator $\Delta(\cdot)_{r_1,r_2}$ depending on two integers $r_1,r_2\in [1,k]$, which acts on a $k$-tuple of non negative integers by decreasing its $r_1$-th entry by one and increasing its $r_2$-th entry by one. 
\begin{definition}\label{definition: standard shift}
Let $k\in\NN$, $\bfd\in\NN^k$. For any $r_1,r_2\in[1,k]$ and for any $k$-tuple $\bft=(t_1,\dots,t_k)\in\A_{\bfd}$ with $t_{r_1}\geq 1$ and $t_{r_2}\leq d_{r_2}-1$, we set 
$$\Delta_{r_1,r_2}(\bft)=\begin{cases}
(t_1,\dots,t_{r_1-1},t_{r_1}-1,t_{r_1+1},\dots,t_{r_2-1},t_{r_2}+1,t_{r_2+1},\dots,t_k),& \text{ if $r_1\leq r_2$}\\
(t_1,\dots,t_{r_2-1},t_{r_2}+1,t_{r_2+1},\dots,t_{r_1-1},t_{r_1}-1,t_{r_1+1},\dots,t_k),& \text{ if $r_1>r_2$}
\end{cases}.$$
Given $\bft,\bft'\in\A_{\bfd,s}$, we say that \emph{$\bft$ and $\bft'$ differ by a standard shift} if there exist $r_1,r_2\in [1,k]$ such that $\bft=\Delta_{r_1,r_2}(\bft')$.
\end{definition}

Given $\bfd\in\NN^k_{\geq3}$, we want to associate to any $J\subseteq [2,k]$ 
a rational normal curve of degree $d=d_1+\cdots+d_k$ in $\PP^{N_\bfd}$.

\begin{definition}
Let $\bfd\in\NN^k_{\geq 3}$ and $d=d_1+\cdots+d_k$. For any $J\subseteq[2,k]$ let $L^{J}\subset \PP^1\times\dots\times \PP^1 $ be the $(1,\dots,1)$-curve defined by the ideal
$$
I^{J}=(x_{1,0}x_{2,1}-\varepsilon_2^J x_{1,1}x_{2,0},\dots,x_{1,0}x_{k,1}-\varepsilon_k^J x_{1,1}x_{k,0}),\text{ where }\varepsilon_r^J=\begin{cases}
     1, & \hbox{if }r\notin J\\
      -1, & \hbox{otherwise}.
\end{cases}
$$
The \emph{degree $d$ rational normal curve associated to $J$} is $\mathcal{C}_{\bf d}^{J}\coloneqq sv_{\boldsymbol{d}}(L^{J})$.
\end{definition}
Note that the set $J$ indicates in which generators of $I^J$ we have a plus sign. If $J=\emptyset$, we write $L$, resp. $I$ and $\mathcal{C}_{\bf d}$, instead of $L^{J}$, resp $I^{J}$ and $\mathcal{C}_{\bf d}^{J}$.

\begin{remark}\label{remark: W-state sta nel quadrello}
Let us consider the affine chart $\mathbb{A}^k\subset\PP^1\times\dots\times\PP^1$ given by $\{x_{1,1}\neq0,\dots,x_{k,1}\neq 0\}$ and let us use in $\mathbb{A}^k$ the affine coordinates
$$x_1=\frac{x_{1,0}}{x_{1,1}},\dots,x_k=\frac{x_{k,0}}{x_{k,1}}.$$
In this setting, the curves $L^{J}$ are lines whose parametric equations are of the form
$$
\begin{cases}
x_1=s\\
x_2=\varepsilon_2^Js\\
\vdots\\
x_k=\varepsilon_k^Js
\end{cases}.$$
As a consequence, the affine ideal defining union of such lines is
$K=(x_2^2-x_1^2,\dots,x_k^2-x_1^2)$
and, in particular, we have $K\subset (x_1^2,\dots,x_k^2).$

\begin{notation}
Let $\PP^1\times \cdots \times \PP^1$ be the multiprojective space given by $k\geq 2$ copies of $\PP^1$. Let $Z\subset \PP^1$ be the length two zero-dimensional scheme supported at $[0:1] $. We denote by $Z_k=(Z)^{\times k}\subset (\PP^1)^{\times k}$ the length $2^k$ zero-dimensional scheme supported at $[0,1;\dots;0,1]$.
\end{notation}
The scheme $Z_k$ is an example of a $2$-symmetric zero-dimensional scheme \cite[Definition 2.1]{CCGI25}, i.e. it possesses the property that any smooth curve of $\PP^1\times\cdots\times\PP^1$ passing through its support intersects $Z_k$ in a 0-dimensional scheme of length $2$ \cite[Proposition 2.18]{CCGI25}. More precisely, $Z_k$ is a $2$-hypercube, i.e. it has maximal length among the schemes possessing the aforementioned property of symmetry. By \Cref{remark: W-state sta nel quadrello}, we have that $Z_k\subset \PP^1\times\dots\times\PP^1$ is contained in the union of the $L^{J}$'s. Hence, by \cite[Lemma 2.1]{BBCG}, we have that $W_{d_1}\otimes\dots\otimes W_{d_k}$ is in the span of the union of the curves $\mathcal{C}_{\bfd}^{J}\subset \PP^{N_{\bf d}}$.
\end{remark}

In the following example we see what happens when $k=2$ and $d_1=d_2=3$ and this case will serve as a running example throughout the whole discussion.

\begin{example}\label{example: step1}
Let us take $k=2$ and $\bfd=(3,3)$ so that $d=3+3=6$. In this case, we have only two possibilities for $J$: either $J=\emptyset$ or $J=\{2\}$. The $(1,1)$-curves $L$ and $L^{\{2\}}$ in $\PP^1\times\PP^1$ are respectively defined by the ideals
$$I=(x_{1,0}x_{2,1}-x_{1,1}x_{2,0}),\qquad I^{\{2\}}=(x_{1,0}x_{2,1}+x_{1,1}x_{2,0}),$$
and the two corresponding rational normal sestics are $\mathcal C_{(3,3)}\coloneqq sv_{(3,3)}(L),$ $\mathcal C_{(3,3)}^{\{2\}}\coloneqq sv_{(3,3)}(L^{\{2\}})$.
\end{example}

\section{Solving the system}\label{section: solving the system}

So far we established that $$ W_{d_1}\otimes \cdots \otimes W_{d_k}\in  \spann\left\{ \cup_{J\subseteq [2,k]} \Cc_{\bf d}^J \right\}\subseteq \PP(\sym^{d_1}\CC^2\otimes \cdots \otimes \sym^{d_k} \CC^2).$$
The purpose of this section is to explicitly write $W_{d_1}\otimes \cdots \otimes W_{d_k}$ in coordinates exploiting the above containment.

Let us start by providing equations for the $d$-dimensional linear spaces  $\mathrm{span}(\mathcal{C}_{\bfd}^{J} )$ containing the rational normal curve $\mathcal{C}_{\bfd}^{J}\subset \PP^{N_{\boldsymbol{d}}}$. The equations will be of the form 
% \begin{align*}
  $   z_{\mathbf{i}}=\pm z_{\mathbf{i}' }$,
 %\end{align*}
where $\mathbf{i}$ and $\mathbf{i}'$ differ by a standard shift (see \Cref{definition: standard shift}). The sign is determined by $J$: each time the shift involves an element of $J$ the minus sign is applied.

\begin{lemma}\label{lemma: eq lineari crn} Let $k\geq 2$ and $ J\subseteq [2,k]$. Cartesian equations describing $\mathrm{span}(\mathcal{C}^{J}_{\bfd})$ are
 $$
 z_{\bfi} = \varepsilon^J_{r} \cdot z_{\Delta_{1,r}(\bfi)},
 $$
 where $2\leq r\leq k$, $ i_1\geq 1,   i_r\leq d_r-1$, and  if $r\notin J$ then $\varepsilon^J_{r}=1$, otherwise $\varepsilon^J_{r}= -1$.
\end{lemma}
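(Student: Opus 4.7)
My plan is to parametrize the curve $L^J$, compute the $z_\bfi$-coordinates of a general point of $\mathcal C_\bfd^J$, verify the stated relations hold on $\mathcal C_\bfd^J$ (hence on its span), and then check that they cut out a linear subspace of the correct dimension. Since $\mathcal C_\bfd^J$ is the Segre-Veronese image of the $(1,\dots,1)$-curve $L^J\simeq\PP^1$, it is a rational curve of degree $d=d_1+\cdots+d_k$ in $\PP^{N_\bfd}$, and hence a rational normal curve whose span has projective dimension exactly $d$. So the goal is to produce enough independent relations to cut out a $(d+1)$-dimensional vector subspace.

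In the affine chart where $x_{r,1}\neq 0$ for all $r$, a parametrization of $L^J$ is $[t:u]\mapsto([t:u],[\varepsilon_2^J t:u],\dots,[\varepsilon_k^J t:u])$. Applying $sv_\bfd$ one finds that the $z_\bfi$-coordinate of the resulting point is proportional to $t^{s}u^{d-s}\prod_{r\in J}(-1)^{i_r}$, where $s=i_1+\cdots+i_k$. In particular, if $\bfi'=\Delta_{1,r}(\bfi)$ then $s(\bfi')=s(\bfi)$ and the sign factor differs by exactly $\varepsilon_r^J$. This gives the vanishing of $z_\bfi-\varepsilon_r^J z_{\Delta_{1,r}(\bfi)}$ on $\mathcal C_\bfd^J$, and hence on $\mathrm{span}(\mathcal C_\bfd^J)$.

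It then remains to check that these relations already cut out a subspace of dimension at most $d+1$. Modulo them, $z_\bfi$ is identified (up to sign) with $z_{\bfi'}$ whenever $\bfi'$ is reachable from $\bfi$ by a finite sequence of shifts $\Delta_{1,r}^{\pm1}$; since every such shift preserves $s$, the level sets $\A_{\bfd,s}$ are preserved. I would then show that each $\A_{\bfd,s}$ is a single orbit, by induction on the $\ell^1$-distance $\sum_r\max(0,i_r-i'_r)$ between two elements: if one of the two slots to be adjusted is slot $1$, a single shift suffices; otherwise we compose two shifts through slot $1$, which is always possible, as either $i_1<d_1$ (apply $\Delta_{r,1}$ first) or $i_1=d_1\geq 1$ (in which case we first apply $\Delta_{1,r'}$ to free up capacity in slot $1$ and then correct with $\Delta_{r,1}$). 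Once connectivity is established, a single coordinate per $s\in\{0,1,\dots,d\}$ determines a point of the cut-out locus, giving dimension $d+1$ and forcing equality with $\mathrm{span}(\mathcal C_\bfd^J)$.

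The main obstacle I expect is the connectivity argument, particularly the boundary case $i_1=d_1$, where one must insert an auxiliary shift to free up room in slot $1$ without disturbing the progress made so far; the remaining parts of the proof are a direct parametric computation and a straightforward dimension count.
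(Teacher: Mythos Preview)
Your proposal is correct and follows essentially the same approach as the paper. The paper verifies the relations by substituting the defining equation $x_{1,0}x_{r,1}=\varepsilon_r^J x_{1,1}x_{r,0}$ directly into the monomial $z_\bfi(Q)$, rather than via an explicit parametrization, but this amounts to the same computation; for the dimension count the paper simply asserts that any two coordinates $z_\bfi,z_\bfj$ with $\bfi,\bfj\in\A_{\bfd,s}$ become proportional, whereas you spell out the connectivity argument on $\A_{\bfd,s}$ in more detail.
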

\begin{proof}
Let $Q\in \mathcal{C}_{\bfd}^{J} $, so there exists $P\in L^{J}$ with $P=[x_{1,0},x_{1,1};\dots;x_{k,0},x_{k,1}]$ such that $sv_{\mathbf{d}}(P)=Q$. In particular, $z_{\bfi}(Q)=x_{1,0}^{i_1}x_{1,1}^{d_1-i_1}\cdots x_{k,0}^{i_k}x_{k,1}^{d_k-i_k} $. Since $P\in L^{J}$ we have that $x_{1,0}x_{r,1}=\varepsilon^J_{r} x_{1,1}x_{r,0}$ for all $r=2,\dots,k$. Hence, if $i_1\geq 1$ and $i_r\leq d_r-1$ then we have
\begin{align}\label{eq: le zeta}
z_{\bfi}(Q)&=x_{1,0}^{i_1}x_{1,1}^{d_1-i_1}\cdots x_{r,0}^{i_r}x_{r,1}^{d_r-i_r}\cdots x_{k,0}^{i_k}x_{k,1}^{d_k-i_k} \\
&=\varepsilon^J_r x_{1,0}^{i_1-1}x_{1,1}^{d_1-(i_1-1)}\cdots x_{r,0}^{i_r+1}x_{r,1}^{d_r-(i_r+1)}\cdots x_{k,0}^{i_k}x_{k,1}^{d_k-i_k} % \nonumber \\
%&
=\varepsilon^J_r z_{\Delta_{1,r}(\bfi)}(Q). \nonumber
\end{align}
Note that the previous equations imply that, if $\bfi,\bfj\in\A_{\bfd,s}$, then  $z_{\bfi}(Q)=(-1)^{a}z_{\bfj}(Q)$ for a certain $a\in \mathbb{N}$. Hence, there are at most $d+1$ free parameters defining the linear space given by \cref{eq: le zeta} and containing $\mathrm{span}(\mathcal{C}_{\bfd}^J)$, and this concludes the proof.
\end{proof}

\begin{remark}\label{remark: equations relating z with the same degree}
Given $J\subseteq [2,k]$ and two $k$-ples $\bfi,\bfj\in\A_{\bfd,s}$, by \Cref{lemma: eq lineari crn} we have that $z_{\bfi}=(-1)^az_{\bfj}$ is an equation of $\spann(\mathcal{C}_{\bfd}^J)$ for a suitable $a\in \{0,1\}$, and \Cref{lemma: eq lineari crn} allows us to compute $a$. For instance, if we consider a $k$-ple $\bfi$ with $i_1\geq 1$ and $\Delta_{r_1,r_2}(\bfi)$, since $\Delta_{r_1,r_2}(\bfi)=\Delta_{r_1,1}\Delta_{1,r_2}(\bfi)$, using twice \Cref{lemma: eq lineari crn} we find
$$z_{\Delta_{r_1,r_2}(\bfi)}=z_{\Delta_{r_1,1}\Delta_{1,r_2}(\bfi)}=\varepsilon_{r_2}z_{\Delta_{r_1,1}(\bfi)}=\varepsilon_{r_1}\varepsilon_{r_2}z_{\bfi}.$$
Note that if $i_1=0$, it is enough to write $\Delta_{r_1,r_2}(\bfi)=\Delta_{1,r_2}\Delta_{r_1,1}(\bfi)$.
In general, it is possible to repeatedly use \Cref{lemma: eq lineari crn} to find the only $a\in\{0,1\}$ such that $z_{\bfi}=(-1)^az_{\bfj}$
is an equation of $\spann(\mathcal{C}_{\bfd}^J)$.
\end{remark}

\begin{remark}\label{remark: free parameters rnc}
Given $J\subseteq[2,k]$, \Cref{lemma: eq lineari crn} tells us that the generic element $T^J\in\spann(\Cc_{\bfd}^J)$ depends on $d+1$ free parameters and there is one free parameter $\alpha_s^J$ for each set of coordinates $\{z_{\bfi}\;|\;\bfi\in\A_{\bfd,s}\}$.
Moreover, by \Cref{remark: equations relating z with the same degree}, we have
$$T^J=(\varepsilon_{\bfi}^J\alpha_s^J)_{\substack{0\leq s\leq d\\ \bfi\in\A_{\bfd,s}}}$$
for suitable $\varepsilon_{\bfi}^J\in\{1,-1\}$. Note that, for any $0\leq s\leq d$ there is a free choice of one (and only one) $\varepsilon_{\bfi}^J$ to be $1$ or $-1$, where $\bfi\in\A_{\bfd,s}$. After we have chosen one $\varepsilon_{\bfi}^J$ with $\bfi\in\A_{\bfd,s}$, then $\varepsilon_{\bfj}^J$ is determined by $J$ for all $\bfj\in \A_{\bfd,s}$. 
\end{remark}

Now, we want to use \Cref{remark: equations relating z with the same degree} to explicitly compute the $\varepsilon_{\bfi}^J$'s.

\begin{lemma}\label{lemma: determine epsilon}
Let $s\in[0,d]$, $J\subseteq[2,k]$, $\bft=(t_1,\dots,t_k)\in\A_{\bfd,s}$ such that $\varepsilon^J_{\bft}=1$ and $\bfi\in\A_{\bfd,s}$. Let
$$N_{\bfi}=\{i_j\;|\;t_j-i_j \text{ is odd},\, j\geq 2\}.$$ Then
$\varepsilon_{\bfi}^J$ is given by $\varepsilon_{\bfi}^J=(-1)^{|N_{\bfi}\cap J|}$. 
\end{lemma}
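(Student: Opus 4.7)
The plan is to compute $\varepsilon_{\bfi}^J$ in one shot by parameterizing the curve $L^J$ and evaluating the coordinates $z_{\bfi}$ at its generic point, rather than iterating \Cref{lemma: eq lineari crn} along a sequence of standard shifts from $\bft$ to $\bfi$. This avoids any bookkeeping of paths and path-independence concerns.

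First I would work in the affine chart $\{x_{r,1}\neq 0\}$ of \Cref{remark: W-state sta nel quadrello}, where $L^J$ is parameterized by $x_1=\lambda$ and $x_r=\varepsilon_r^J\lambda$ for $r\geq 2$. For a general point $Q=sv_{\bfd}(P)\in\Cc_{\bfd}^J$ on this parameterization, the monomial expression of the coordinate $z_{\bfi}$ (as used in the proof of \Cref{lemma: eq lineari crn}) becomes
$$z_{\bfi}(Q)=\prod_{r=1}^k(\varepsilon_r^J\lambda)^{i_r}=\lambda^{s}\prod_{r=2}^k(\varepsilon_r^J)^{i_r},$$
with the convention $\varepsilon_1^J:=1$ and $s=i_1+\cdots+i_k$. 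The same calculation for $\bft\in\A_{\bfd,s}$ gives
$$\frac{z_{\bfi}(Q)}{z_{\bft}(Q)}=\prod_{r=2}^k(\varepsilon_r^J)^{i_r-t_r},$$
which is a constant on $\Cc_{\bfd}^J$ and hence a linear relation valid on $\spann(\Cc_{\bfd}^J)$. Comparing with \Cref{remark: free parameters rnc} this forces $\varepsilon_{\bfi}^J=\varepsilon_{\bft}^J\prod_{r=2}^k(\varepsilon_r^J)^{i_r-t_r}$.

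To finish, I would use the hypothesis $\varepsilon_{\bft}^J=1$ together with the fact that $\varepsilon_r^J=-1$ precisely when $r\in J$: only the indices $r\in J$ can contribute a nontrivial factor, and for such $r$ we have $(\varepsilon_r^J)^{i_r-t_r}=(-1)^{i_r-t_r}$, which equals $-1$ exactly when $t_r-i_r$ is odd. Collecting these contributions yields $(-1)^{|N_{\bfi}\cap J|}$, after reading $N_{\bfi}$ as the set of indices $j\in[2,k]$ such that $t_j-i_j$ is odd. The only potentially subtle point in a shift-by-shift approach would be verifying that the resulting sign is independent of the chosen sequence of standard shifts from $\bft$ to $\bfi$; the parameterization argument sidesteps this entirely, because evaluation on a fixed rational parameterization of $L^J$ is intrinsically well-defined.
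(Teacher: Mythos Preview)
Your argument is correct and takes a genuinely different route from the paper's. The paper expresses $\bfi$ as an explicit composition of standard shifts $\Delta_{1,r}$ and $\Delta_{r,1}$ applied to $\bft$, tracks the sign $\varepsilon_r^J$ picked up at each step via \Cref{lemma: eq lineari crn}, and then reduces the resulting product modulo~2 by splitting indices into the subsets $L'_{\bfi}$ and $M'_{\bfi}$ according to the sign and parity of $t_j-i_j$. Your parameterization of $L^J$ in the affine chart gives the closed formula $z_{\bfi}(Q)=\lambda^{s}\prod_{r\geq 2}(\varepsilon_r^J)^{i_r}$ directly, so the ratio $z_{\bfi}/z_{\bft}$ is read off at once; this bypasses the need to choose a path of shifts (and hence the implicit path-independence claim the paper's phrase ``up to the order of the operations'' relies on). The paper's approach has the virtue of staying entirely within the combinatorial framework of \Cref{definition: standard shift} and \Cref{remark: equations relating z with the same degree}, while yours is shorter and makes transparent why only the parity of $t_r-i_r$ for $r\in J$ matters. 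Your reading of $N_{\bfi}$ as the set of \emph{indices} $j\in[2,k]$ with $t_j-i_j$ odd (rather than the values $i_j$) is the intended one, as the intersection $N_{\bfi}\cap J$ with $J\subseteq[2,k]$ only makes sense that way.
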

\begin{proof}
Let
$$L_{\bfi}=\left\{l_1,\dots,l_p\in[2,k]\;|\; i_{l_e}<t_{l_e}\right\},\quad M_{\bfi}=\left\{m_1,\dots,m_q\in[2,k]\;|\; i_{m_f}>t_{m_f}\right\}.$$
Up to the order of the operations (there exists always at least one), we have
$$\bfi=\Delta_{1,l_1}^{t_{l_{1}}-i_{l_{1}}}\dots\Delta_{1,l_p}^{t_{l_{p}}-i_{l_{p}}}\Delta_{m_1,1}^{i_{m_{1}}-t_{m_{1}}}\dots\Delta_{m_b,1}^{i_{m_{q}}-t_{m_q}}(\bft), $$
and thus
$$\varepsilon_{\bfi}^J=\prod_{e=1}^p\varepsilon_{l_e}^{t_{l_{e}}-i_{l_{e}}}\prod_{f=1}^q\varepsilon_{m_f}^{i_{m_{f}}-t_{m_f}}.$$
If we set
$$L_{\bfi}'=\left\{l_e\in L_{\bfi}\;|\;t_{l_e}-i_{l_{e}}\text{ is odd}\right\},\quad M_{\bfi}'=\left\{m_f\in M_{\bfi}\;|\;t_{m_f}-i_{m_{f}}\text{ is odd}\right\},$$
then 
$$\varepsilon_{\bfi}^J=\prod_{l_e\in L_{\bfi}'}\varepsilon^J_{l_e}\prod_{m_f\in M_{\bfi}'}\varepsilon^J_{m_f}=\prod_{l_e\in L_{\bfi}'}(-1)^{|\{l_e\}\cap J|}\prod_{m_f\in M_{\bfi}'}(-1)^{|\{m_f\}\cap J|}=(-1)^{|L_{\bfi}'\cap J|+|M_{\bfi}'\cap J|.}$$
To get the statement, it is enough to note that $L'_{\bfi}$ and $M_{\bfi}'$ are disjoint.
\end{proof}

Let us continue \Cref{example: step1} and let us see how to explicitly write the generic element of $\spann(\Cc_d^J)$ thanks to \Cref{lemma: determine epsilon}.

\begin{example}\label{example: step2}
We take  $k=2$ and $\bfd=(3,3)$ and we want to write explicitly the general elements $T\in\spann(\mathcal C_{(3,3)})$ and $T^{\{2\}}\in\spann(\mathcal C_{3,3}^{\{2\}})$ using the results just described. By \Cref{remark: free parameters rnc}, we know that the coordinates of $T$ and $T^{\{2\}}$ are of the form
$$T=(\varepsilon_{\bfi}\alpha_s)_{\substack{0\leq s\leq 6\\ \bfi\in\A_{(3,3),s}}},\qquad T^{\{2\}}=(\varepsilon_{\bfi}^{\{2\}}\alpha_s^{\{2\}})_{\substack{0\leq s\leq 6\\ \bfi\in\A_{(3,3),s}}}.$$
Moreover, for any $0\leq s\leq 6$, we have to choose $\bfi,\bfi'\in\mathcal A_{(3,3),s}$ and set $\varepsilon_\bfi$ and $\varepsilon_{\bfi'}^{\{2\}}$ equal to 1. In principle, $\bfi$ and $\bfi'$ could be different but, for the sake of simplicity, here, and in the rest of the paper, we choose the same $\bfi$ for any $J$. For $s=1,2,3,5,6$ we choose the first $\bfi\in\mathcal A_{(3,3),s}$ with respect to the lexicographic order, while for $s=d-k=4$ we choose $\bfi=(2,2)$ which is the multi-index corresponding to $W_3\otimes W_3$. For this choice of the $\bfi$'s, we set $\varepsilon_\bfi^J=1$ for $J=\emptyset,\{2\}$. More precisely, we have
$$\varepsilon_{(0,0)}^J=\varepsilon_{(1,0)}^J=\varepsilon_{(2,0)}^J=\varepsilon_{(3,0)}^J=\varepsilon_{(2,2)}^J=\varepsilon_{(3,2)}^J=\varepsilon_{3,3}^J=1,\quad\text{for $J=\emptyset,\{2\}$}.$$
Applying \Cref{lemma: determine epsilon} with this choice, we find that the coordinates of $T$ and $T^{\{2\}}$ are
\begin{gather*}
T=(\alpha_0,\alpha_1,\alpha_1,\alpha_2,\alpha_2,\alpha_2,\alpha_3,\alpha_3,\alpha_3,\alpha_3,\alpha_4,\alpha_4,\alpha_4,\alpha_5,\alpha_5,\alpha_6)\\
T^{\{2\}}=(\alpha_0^{\{2\}},\alpha_1^{\{2\}},-\alpha_1^{\{2\}},\alpha_2^{\{2\}},-\alpha_2^{\{2\}},\alpha_2^{\{2\}},\alpha_3^{\{2\}},-\alpha_3^{\{2\}},\alpha_3^{\{2\}},-\alpha_3^{\{2\}},-\alpha_4^{\{2\}},\alpha_4^{\{2\}},-\alpha_4^{\{2\}},\alpha_5^{\{2\}},-\alpha_5^{\{2\}},\alpha_6^{\{2\}}).
\end{gather*}
\end{example}
The following lemmata are technical results needed later. In this first lemma we are interested in  comparing cardinalities of subsets of $[2,k]$ whose intersection with a given subset $N$ of $[2,k]$ is either even or odd.
\begin{lemma}\label{lemma: even and odd}
Let $k\geq 2$ and $N\subseteq[2,k]$. Then
\begin{enumerate}[label={(\arabic*)}]
    \item \label{case 1 lemma even odd} if $N\neq\emptyset$, then
    $$|\{J\subseteq [2,k] \;|\;|J\cap N|\text{ is even}\}|=|\{J\subseteq [2,k] \;|\;|J\cap N|\text{ is odd}\}|=2^{k-2};$$
    \item \label{case 2 lemma even odd} if $N\neq[2,k]$, then 
    $$|\{J\subseteq [2,k]\;|\; |J|+|J\cap N|\text{ is even}\}|=|\{J\subseteq [2,k]\;|\; |J|+|J\cap N|\text{ is odd}\}|=2^{k-2}.$$ 
\end{enumerate}

\end{lemma}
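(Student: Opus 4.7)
The plan is to prove both statements by the same involution device, choosing the ``pivot'' element differently in the two cases. Since $[2,k]$ has $k-1$ elements, $[2,k]$ has $2^{k-1}$ subsets in total, so in each case I only need to exhibit a bijection between the two sets being counted; then each must have cardinality $2^{k-2}$.

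For part \ref{case 1 lemma even odd}, the hypothesis $N\neq\emptyset$ lets me fix an element $n_0\in N$. I would define the map
$$\phi\colon 2^{[2,k]}\to 2^{[2,k]},\qquad \phi(J)=J\triangle\{n_0\},$$
where $\triangle$ denotes symmetric difference. Clearly $\phi$ is an involution, hence a bijection. Since $n_0\in N$, exactly one of $J$ and $\phi(J)$ contains $n_0$, and on the intersection with $N$ this toggles a single element; so $|J\cap N|$ and $|\phi(J)\cap N|$ differ by $1$ and hence have opposite parities. Thus $\phi$ restricts to a bijection between $\{J:|J\cap N|\text{ even}\}$ and $\{J:|J\cap N|\text{ odd}\}$, which together exhaust $2^{[2,k]}$. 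Each set therefore has size $2^{k-2}$.

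For part \ref{case 2 lemma even odd}, the hypothesis $N\neq[2,k]$ lets me pick $n_0\in [2,k]\setminus N$, and I use the same involution $\phi(J)=J\triangle\{n_0\}$. This time $|J|$ and $|\phi(J)|$ differ by $1$, so $|J|$ flips parity, while $|J\cap N|$ is unchanged (because $n_0\notin N$ means $J\cap N=\phi(J)\cap N$). Consequently the parity of $|J|+|J\cap N|$ flips under $\phi$, and the same double-counting argument as before yields the cardinality $2^{k-2}$ for both sets.

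The argument is essentially mechanical — the only ``obstacle'' is ensuring the pivot element exists, which is exactly what the hypotheses $N\neq\emptyset$ and $N\neq[2,k]$ supply. As a sanity check, one can re-derive the same conclusion via the identity
$$\sum_{J\subseteq[2,k]}(-1)^{|J\cap N|}=\prod_{i\in N}(1+(-1))\prod_{i\in[2,k]\setminus N}(1+1),$$
which vanishes precisely when $N\neq\emptyset$, and analogously
$$\sum_{J\subseteq[2,k]}(-1)^{|J|+|J\cap N|}=\prod_{i\in N}(1+(-1)^{2})\prod_{i\in[2,k]\setminus N}(1+(-1)),$$
which vanishes precisely when $[2,k]\setminus N\neq\emptyset$; each vanishing gives (even)$-$(odd)$=0$. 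I would however present the involution proof as it is more transparent.
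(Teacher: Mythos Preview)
Your proof is correct. It differs from the paper's argument: the paper decomposes each $J$ as $J_1\cup J_2$ with $J_1\subseteq N$ and $J_2\subseteq[2,k]\setminus N$, and then counts directly, using that exactly half of the subsets of a nonempty finite set have even cardinality (applied to $N$ for part~(1) and to $[2,k]\setminus N$ for part~(2)). Your involution $J\mapsto J\triangle\{n_0\}$ with $n_0\in N$ (resp.\ $n_0\in[2,k]\setminus N$) is a cleaner packaging of the same parity principle: it avoids any explicit count and makes the role of the hypotheses $N\neq\emptyset$ and $N\neq[2,k]$ completely transparent, since they are exactly what guarantees the pivot exists. The product-expansion sanity check you add is also nice, as it is precisely the form in which the lemma is later invoked in the proof of Proposition~\ref{prop: mortale}. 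Either approach is perfectly adequate here; yours is marginally more elegant, the paper's marginally more explicit.
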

\begin{proof}
Since $|2^{[2,k]}|=2^{k-1}$, in both points \ref{case 1 lemma even odd} and \ref{case 2 lemma even odd} it is enough to prove one of the equalities. A subset $J\subseteq[2,k]$ can be uniquely written as $J=J_1\cup J_2$ with $J_1\subseteq N$ and $J_2\subseteq[2,k]\setminus J_1$, so that $|J|=|J_1|+|J_2|$ and $|J\cap N|=|J_1|$. It follows that
$$|\{J\subseteq [2,k]\;|\;|J\cap N|\text{ is even}\}|=2^{k-1-|N|}2^{|N|-1}=2^{k-2}.$$
Moreover, $|J|$ and $|J\cap N|$ have the same parity if and only if $|J_2|$ is even. Hence,
\begin{equation*}
|\{J\subseteq [2,k]\;|\; |J|+|J\cap N|\text{ is even}\}|=2^{|N|}2^{k-1-|N|-1}=2^{k-2}. \qedhere
\end{equation*}
\end{proof}

We want to use the previous lemma to have a closer look at the cardinality of the set $\B_{\bfd,s}$ made of all the distinct $(\varepsilon^{\{ 2\}}_{\bfi},\dots,\varepsilon^{\{ k\}}_{\bfi})$ as $\bfi$ varies in $\A_{\bfd, s}$ for a fixed $s\in [0,d]$, and we are especially interested in those $\B_{\bfd,s}$ whose cardinality is $2^{k-1}$. Before proceeding, let us make an example.
\begin{example}
Let $k=3$, $\bfd=(3,3,3)$. Let $s=0$, so that $\A_{\bfd,s}=\{(0,0,0) \}$ and $\B_{\bfd,s}$ defined as $$\B_{\bfd,s}=\{ (\varepsilon_{\bfi}^{\{2\}},\dots,\varepsilon_{\bfi}^{\{k\}})\;|\;\bfi\in\A_{\bfd,s} \}$$ is simply the singleton $\{(\varepsilon_{0,0,0}^{\{2\}},\varepsilon_{0,0,0}^{\{ 3\}}) \}$. Let now $s=1$ so that $\A_{\bfd,s}=\{ (1,0,0),(0,1,0),(0,0,1)\}$ and fix $\varepsilon^J_{1,0,0}=1$ for $J=\{2\},\{3\}$. For $\bfi=(1,0,0) $ we have $(\varepsilon^{\{ 2\}}_{\bfi},\varepsilon^{\{ 3\}}_{\bfi})=(1,1)$, for $\bfi=(0,1,0) $ we have $(\varepsilon^{\{ 2\}}_{\bfi},\varepsilon^{\{ 3\}}_{\bfi})=(-1,1)$, and the last couple will be $(1,-1)$. %$$
%(\varepsilon^{\{ 2\}}_{\bfi},\varepsilon^{\{ 3\}}_{\bfi})=\begin{cases}
%    (1,1), & \text{for }\bfi=(1,0,0),\\
%    (-1,1), & \text{for }\bfi=(0,1,0),\\
%    (1,-1), & \text{for } \bfi=(0,0,1).
%\end{cases}
%$$
So $|\B_{\bfd,1}|=3$. Now take $s=2$ and fix $\varepsilon^J_{2,0,0}=1$ for $J=\{2\},\{3\}$.
In this case
$$
(\varepsilon^{\{ 2\}}_{\bfi},\varepsilon^{\{ 3\}}_{\bfi})=\begin{cases}
    (1,1), & \text{for }\bfi=(2,0,0),(0,2,0),(0,0,2),\\
    (-1,1), & \text{for }\bfi=(1,1,0),\\
    (1,-1), & \text{for } \bfi=(1,0,1),\\
    (-1,-1), & \text{for } \bfi=(0,1,1).
\end{cases}
$$
Hence  $|\B_{\bfd,2}|=4$ and $s=2$ is the first integer in $\{0,\dots,9\}$ for which this happens. 
\end{example}
The behaviour just analyzed is not a coincidence, in fact we prove the following.
\begin{lemma}\label{lemma: hadamard if greater than k-2}
Let $k\geq 2$, $\bfd\in\NN^k_{\geq 3}$, choose $\bft\in\A_{\bfd,s}$ such that $\varepsilon_{\bft}^J=1$ for any $J\subseteq[2,k]$ and set  \linebreak $\B_{\bfd,s}=\{(\varepsilon_{\bfi}^{\{2\}},\dots,\varepsilon_{\bfi}^{\{k\}})\;|\;\bfi\in\A_{\bfd,s}\}$. Then $|\B_{\bfd,s}|$ does not depend on $\bft$ and 
$$\min\left\{s\in[0,d]\;|\; |\B_{\bfd,s}|=2^{k-1}\right\}=k-1.$$
\end{lemma}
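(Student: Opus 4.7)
\medskip
\noindent\emph{Proof plan.} My plan is to translate the statement into a purely combinatorial count of parity patterns. The starting observation is that specializing \Cref{lemma: determine epsilon} to $J=\{r\}$ for $r\in[2,k]$ yields $\varepsilon_{\bfi}^{\{r\}}=(-1)^{[r\in N_{\bfi}]}$, so the tuple $(\varepsilon_{\bfi}^{\{2\}},\dots,\varepsilon_{\bfi}^{\{k\}})$ is in bijection with the subset $N_{\bfi}\subseteq[2,k]$ of indices $j$ with $t_j-i_j$ odd. It follows that $|\B_{\bfd,s}|$ equals the number of distinct parity patterns $(i_j\bmod 2)_{j=2}^{k}$ realized by some $\bfi\in\A_{\bfd,s}$; replacing $\bft$ by another admissible $\bft'$ only flips the recorded patterns by a fixed componentwise sign, so the cardinality is insensitive to $\bft$.

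With the reduction in hand, the two halves of the claim $\min\{s:|\B_{\bfd,s}|=2^{k-1}\}=k-1$ are handled separately. The easy direction, $s\leq k-2\Rightarrow |\B_{\bfd,s}|<2^{k-1}$, follows by exhibiting a missing pattern: the all-ones pattern $(1,\dots,1)$ would require $i_j$ odd for every $j\geq 2$, hence $i_2+\cdots+i_k\geq k-1>s$, which is impossible. For the harder direction, $s=k-1$, I would produce, for every pattern $(\delta_2,\dots,\delta_k)\in\{0,1\}^{k-1}$, a witness $\bfi$ by setting $i_j\in\{\delta_j,\delta_j+2\}$ for $j\geq 2$ (allowed since $d_j\geq 3$). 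This makes $i_2+\cdots+i_k$ range over every integer of parity $\sigma\coloneqq\delta_2+\cdots+\delta_k$ in $[\sigma,\sigma+2(k-1)]$, after which I set $i_1=(k-1)-(i_2+\cdots+i_k)$.

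The main technical point, and the only delicate step, is ensuring one can pick the sum $i_2+\cdots+i_k$ so that $i_1\in[0,d_1]$. Rewriting, the sum must lie in the target window $[\max(0,k-1-d_1),k-1]$ with parity $\sigma$. When $d_1\geq k-1$ the value $\sigma$ itself lies in the window and is admissible. Otherwise the window $[k-1-d_1,k-1]$ has length $d_1\geq 3$, so it contains integers of both parities, and either $\sigma$ itself already lies in the window or every integer in the window exceeds $\sigma$; in either case a valid $v=\sigma+2m$ with $m\in[0,k-1]$ exists, giving the required $\bfi$. Together with the easy direction this pins the minimum to $k-1$.
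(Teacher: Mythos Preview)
Your argument is correct. The reduction to counting parity patterns $(i_j\bmod 2)_{j\geq 2}$ is the same as in the paper, and your lower bound (the all-odd pattern forces $i_2+\cdots+i_k\geq k-1$) coincides with the paper's. The difference lies in how you handle the construction at $s=k-1$.

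The paper, after assuming without loss of generality $d_1\geq\cdots\geq d_k$, fixes a particular base point $\bft$ depending on $c=\min\{r:d_1+\cdots+d_r\geq k-1\}$ and then builds the witness $\bfi$ by a rather involved sequence of shift operators $\Delta_{\cdot,\cdot}$, partitioning the index set into four pieces $E,F,G,H$ and verifying a numerical inequality at the end. Your route is considerably shorter: you work directly with the parity patterns (so no specific $\bft$ is needed), set each $i_j\in\{\delta_j,\delta_j+2\}$ using only $d_j\geq 3$, and reduce everything to the elementary observation that the interval $[\max(0,k-1-d_1),\,k-1]$ meets the arithmetic progression $\{\sigma,\sigma+2,\dots,\sigma+2(k-1)\}$. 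This bypasses the ordering assumption, the constant $c$, and the bookkeeping with $E,F,G,H$ entirely. The paper's procedure has the mild advantage of making the $\Delta$-moves explicit (which matches the language of the surrounding section), but your construction is more transparent and requires no case split beyond $d_1\geq k-1$ versus $d_1<k-1$.
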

\begin{proof}
It is easy to see that $|\B_{\bfd,s}|$ does not depend on the choice of $\bft$. By \Cref{lemma: determine epsilon}, given $a_2,\dots,a_{k}\in\{0,1\}$, then $((-1)^{a_2},\dots,(-1)^{a_{k}})\in\B_{\bfd,s}$ if and only if there exist a $k$-ple $\bfi\in\A_{\bfd,s}$ such that $t_j-i_j\equiv a_j\bmod 2$ for any $j=2,\dots,k$ (note that there is no condition on $i_1$). In particular, for the $k$-ple $(-1,\dots,-1)$ the corresponding $\bfi$ has to belong to $\A_{\bfd,s}$ with $s\geq k-1$. Thus, $\min\{s\in[0,d]\;|\;|\B_{\bfd,s}|=2^{k-1}\}\geq k-1$ and it remains to show that the equality holds. Without loss of generality, we can suppose $d_1\geq d_2\geq\dots\geq d_k\geq 3$. Let
$$c\coloneqq\min\{r\in[1,k]\;|\;d_1+\dots+d_r\geq k-1\}.$$
Since $|\B_{\bfd,s}|$ does not depend on $\bft$, we can choose
$$\bft=(d_1,\dots,d_{c-1},(k-1)-(d_1+\dots+d_{c-1}),0,\dots,0).$$
We have to prove that for any $a_2,\dots,a_k\in[0,1]$ there exists $\bfi\in{\A_{\bfd,k-1}}$ such that
$$(\varepsilon_{\bfi}^{\{2\}},\dots,\varepsilon_{\bfi}^{\{k\}})=((-1)^{a_2},\dots,(-1)^{a_k}),$$
which, by \Cref{lemma: determine epsilon}, is equivalent to show that there exists $\bfi\in{\A_{\bfd,k-1}}$ such that $t_j-i_j\equiv a_j\bmod 2$ for any $j=2,\dots,k$.
To do that, set
\begin{gather*}
E=\left\{e_1,\dots,e_u\;|\;2\leq e_j\leq c,\, a_{e_j}=0\right\}, \quad F=\left\{f_1,\dots,f_v\;|\;2\leq f_j\leq c,\, a_{f_j}=1\right\}\\
G=\left\{g_1,\dots,g_w\;|\;c+1\leq g_j\leq k,\, a_{g_j}=0\right\}, \quad H=\left\{h_1,\dots,h_z\;|\;c+1\leq h_j\leq k,\, a_{h_j}=1\right\}.
\end{gather*}
Suppose $v\leq z$ and consider the following procedure:
\begin{itemize}
    \item apply $\Delta_{f_1,h_1}\dots \Delta_{f_v,h_v}$;
    \item take $0\leq b_1\leq\lfloor t_{e_1}/2\rfloor$ and apply $\Delta_{e_1,h_{p}}$ for $p\in[v+1,v+2b_1]$;
    \item for $q=2,\dots,u$ take $0\leq b_q\leq \lfloor t_{e_q}/2\rfloor$ and apply $\Delta_{e_q,h_{p}}$ for any 
    $$p\in\left[v+1+2\sum_{l=1}^{q-1}b_l,v+2\sum_{l=1}^{q}b_l\right];$$
    \item take $0\leq c_1\leq\lfloor (t_{f_1}-1)/2\rfloor$ and apply $\Delta_{f_1,h_{p}}$ for $$p=\left[v+2\sum_{l=1}^{u}b_l+1,v+2\sum_{l=1}^{u}b_l+2c_1\right];$$
    \item for $q=2,\dots,v$ take $0\leq c_q\leq \lfloor (t_{f_q}-1)/2\rfloor$ and apply $\Delta_{f_q,h_{p}}$ for any 
    $$p\in\left[v+1+2\sum_{l=1}^ub_l+2\sum_{l=1}^{q-1}c_l,v+2\sum_{l=1}^ub_l+2\sum_{l=1}^{q}c_l\right];$$
    \item take $0\leq m\leq t_1$ such that $m+v+2\sum_{l=1}^ub_l+2\sum_{l=1}^{v}c_l=z$
    and apply $\Delta_1,h_{p}$ for any 
    $$p\in\left[v+2\sum_{l=1}^ub_l+2\sum_{l=1}^{v}c_l+1,z\right].$$
\end{itemize}
Let $\bfi$ the $k$-ple obtained by applying this procedure to $\bft$. Then, $t_j-i_j\equiv a_j\bmod 2$ for any $j=2,\dots,k$ and, by construction, $\bfi\in\A_{\bfd,k-1}$. Thus, it is enough to prove that there exist $b_1,\dots,b_u,c_1,\dots,c_v,m$ such that $m+v+2\sum_{l=1}^ub_l+2\sum_{l=1}^{v}c_l=z$, and this is equivalent to show that the maximum value of the lhs of the equation is at least the maximum value of the rhs of the equation. We have $z\leq k-c$ and
\begin{gather*}
t_1+v+2\sum_{j\in E}^u\left\lfloor\frac{t_j}{2}\right\rfloor+2\sum_{j\in F}^{v}\left\lfloor\frac{t_j-1}{2}\right\rfloor\geq t_1+v+2\sum_{j\in E}\left(\frac{t_j}{2}-\frac{1}{2}\right)+2\sum_{j\in F}\left(\frac{t_j}{2}-1\right)=\sum_{j=1}^ct_j-u-v=k-c,
\end{gather*}
and this ends the proof in the case $v\leq z$. For $v>z$ it is enough to consider a procedure analogue to the one described.
\end{proof}
By \Cref{remark: W-state sta nel quadrello}, for any $J\subseteq[2,k]$, there exists $T^J\in\spann(\Cc_{\bfd}^J)$ such that $W_{d_1}\otimes\dots\otimes W_{d_k}\in \mathrm{span}\{\cup_{J\subseteq [2,k]}T^J\}$. For convenience we work with a multiple of $W_{d_1}\otimes \cdots \otimes W_{d_k}$, in order to simplify computations. To find the $T^J$'s we have to solve the linear system 
\begin{align}\label{eq: famoso sistema}
\mathcal S: \prod_{j=1}^kd_jW_{d_1}\otimes\dots\otimes W_{d_k}= \sum_{J \subseteq[2,k]}  T^{J},
\end{align}
whose unknowns are the $\alpha_s^J$'s with $J\subseteq[2,k]$ and $0\leq s\leq d$, and whose equations are labelled by $\A_{\bfd,s}$. We order $2^{[2,k]}$ as follows: given $J_1,J_2\subseteq[2,k]$ with $J_1\neq J_2$, then $J_1>J_2$ if and only if $|J_1|<|J_2|$ or $|J_1|=|J_2|$ and $\min J_1<\min J_2$.  We order the unknowns as follows: given $\alpha_{s_1}^{J_1}$ $\alpha_{s_2}^{J_2}$, we have $\alpha_{s_1}^{J_1}>\alpha_{s_2}^{J_2}$ if $s_1<s_2$ or $s_1=s_2$ and
$J_1>J_2$, where the subsets of $[2,k]$ are ordered by lexicographic order. 

Considering the structure of the $T^J$'s, see \Cref{remark: free parameters rnc}, we note that $\mathcal S$ can be split in $d+1$ independent linear systems $\mathcal S_s$, with $s=0,\dots,d$, where the unknowns of $\mathcal S_s$ are $(\alpha_s^J)_{J\subseteq[2,k]}$. Moreover, $\mathcal S_s$ is homogeneous for any $s\neq d-k$. In the next proposition, we exhibit a non-trivial solution for any $\mathcal S_s$ admitting one.

\begin{proposition}\label{prop: mortale}
Let $k\in\NN$ and $\bfd\in\NN^k$ with $k\geq 2$ and $d_j\geq 3$. For any $s=0,\dots,d$ choose $\bft^{(s)}\in\A_{\bfd,s}$ with $\bft^{(d-k)}=(d_1-1,\dots,d_k-1)$, and set $\varepsilon_{\bft^{(s)}}^J=1$ for any $J\subseteq[2,k]$. The subsystem $\mathcal S_s$ of the system \eqref{eq: famoso sistema} admits a non-zero solution if and only if $s\in[0,k-2]\cup\{d-k\}\cup[d-k+2,d]$. For $s\in[0,k-2]\cup[d-k+2,d]$ a non-zero solution of the subsystem $\mathcal{S}_s$ is 
$$\left(\alpha_s^J\right)_{J\subseteq[2,k]}=\left(-1^{|J|}\right)_{J\subseteq[2,k]}.$$
The unique solution of $\mathcal S_{d-k}$ is given by $\alpha_{d-k}^J=2^{1-k}$ for any $J\subseteq[2,k]$.
  
\end{proposition}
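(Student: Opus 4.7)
The plan is to read $\mathcal S$ off in coordinates and analyse each subsystem $\mathcal S_s$ through its coefficient matrix. Since $\prod_{j=1}^k d_j\,W_{d_1}\otimes\cdots\otimes W_{d_k}$ has exactly one non-zero coordinate, equal to $1$, at $\bfi^\star=(d_1-1,\dots,d_k-1)\in\A_{\bfd,d-k}$, the subsystem reads
\[\sum_{J\subseteq[2,k]}\varepsilon_{\bfi}^J\,\alpha_s^J=\delta_{\bfi,\bfi^\star}\,\delta_{s,d-k},\qquad \bfi\in\A_{\bfd,s},\]
and is homogeneous unless $s=d-k$.

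First I would compute the rank of the coefficient matrix $M_s=(\varepsilon_{\bfi}^J)_{\bfi,J}$. By \Cref{lemma: determine epsilon}, each row of $M_s$ has the form $((-1)^{|N_{\bfi}\cap J|})_J$, i.e.\ a character of $(\ZZ/2)^{k-1}$; since distinct characters are orthogonal, $\rk M_s=|\B_{\bfd,s}|$. Combining \Cref{lemma: hadamard if greater than k-2}, the symmetry $\bfi\mapsto\bfd-\bfi$, and the inclusion $\B_{\bfd,s}\subseteq\B_{\bfd,s+2}$ induced by the parity-preserving shift $\bfi\mapsto\bfi+2\bfe_1$, one obtains $|\B_{\bfd,s}|=2^{k-1}$ exactly for $s\in[k-1,d-k+1]$. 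Since $d_j\geq 3$ forces $d\geq 3k$, the value $d-k$ is strictly interior to this range, so $M_{d-k}$ is square and invertible, while $M_s$ is rank-deficient precisely for $s\in[0,k-2]\cup[d-k+2,d]$.

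For $s\in[k-1,d-k+1]\setminus\{d-k\}$, $\mathcal S_s$ is homogeneous with invertible coefficient matrix and hence admits only the zero solution. For $s=d-k$, invertibility of $M_{d-k}$ yields a unique solution, which I would verify by plugging in the ansatz $\alpha_{d-k}^J=2^{1-k}$: the $\bfi$-th equation becomes $2^{1-k}\sum_J\varepsilon_{\bfi}^J$, which by \Cref{lemma: even and odd}, part~(1), vanishes when $N_{\bfi}\neq\emptyset$ and equals $1$ when $N_{\bfi}=\emptyset$. It remains to show that $\bfi^\star$ is the only $\bfi\in\A_{\bfd,d-k}$ with $N_{\bfi}=\emptyset$: this condition forces $i_j\equiv d_j-1\pmod 2$, hence $i_j\leq d_j-1$, for $j\geq 2$; combining with $\sum_j i_j=\sum_j(d_j-1)$ and $i_1\leq d_1$, a brief parity check pins $\bfi$ down to $\bfi^\star$.

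For the rank-deficient cases I would propose the explicit kernel vector $\alpha_s^J=(-1)^{|J|}$. Substituting, the $\bfi$-th equation becomes $\sum_J(-1)^{|J\cap N_{\bfi}|+|J|}$, which by \Cref{lemma: even and odd}, part~(2), vanishes whenever $N_{\bfi}\neq[2,k]$. To enforce this for every $\bfi\in\A_{\bfd,s}$ I would choose $\bft^{(s)}$ with all entries $t_j$, $j\geq 2$, even when $s\leq k-2$ (such a $\bft^{(s)}\in\A_{\bfd,s}$ always exists because $d_j\geq 3$ lets any excess $s-d_1$ be absorbed in even increments); then $N_{\bfi}=[2,k]$ would force $i_j$ odd for every $j\geq 2$, contradicting $\sum_{j\geq 2}i_j\leq s\leq k-2<k-1$. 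The case $s\geq d-k+2$ follows by the symmetry $\bfi\mapsto\bfd-\bfi$. The main obstacle I expect is precisely this last construction: although \Cref{lemma: hadamard if greater than k-2} guarantees that \emph{some} sign pattern is missing from $\B_{\bfd,s}$ for $s<k-1$, arranging the specific pattern $(-1,\dots,-1)$ to be the missing one via a careful placement of $\bft^{(s)}$ inside the admissible hypercube takes effort.
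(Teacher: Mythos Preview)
Your proposal is correct and follows the same strategy as the paper: both read the rows of $M_s$ as $\pm1$-characters of $(\ZZ/2)^{k-1}$ (the paper phrases this as ``Hadamard submatrix''), invoke \Cref{lemma: hadamard if greater than k-2} for the threshold $s=k-1$, and plug in the candidates $2^{1-k}$ and $(-1)^{|J|}$ verifying them through the two parts of \Cref{lemma: even and odd}. You are in fact more careful than the paper at one point: before applying part~(2) of \Cref{lemma: even and odd} you explicitly construct $\bft^{(s)}$ with all $t_j$ ($j\ge2$) even so as to force $N_{\bfi}\neq[2,k]$, a hypothesis the paper uses without checking. Conversely your monotonicity step ``$\B_{\bfd,s}\subseteq\B_{\bfd,s+2}$ via $\bfi\mapsto\bfi+2\bfe_1$'' is slightly loose---the shift fails when $i_1\ge d_1-1$, and step-$2$ monotonicity from $s=k-1$ alone covers only one parity class of $s$---but the paper's corresponding sentence (``there is a subset of $\A_{\bfd,s}$ in bijection with $\A_{\bfd,k-1}$'') is equally informal; in both cases the honest fix is to argue directly that every parity pattern $(i_2\bmod 2,\dots,i_k\bmod 2)$ is realised in $\A_{\bfd,s}$ for each $s\in[k-1,d-k+1]$.
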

\begin{proof}
Let $s\in[0,k-2]$. The equations of $\mathcal S_s$ are
$$\mathcal S_{s,\bfi}:\sum_{J\subseteq[2,k]}\varepsilon_{\bfi}^J\alpha_s^J=0$$
as $\bfi$ varies in $\A_{\bfd,s}$. For any $\bfi\in\A_{\bfd,s}$, by \Cref{lemma: determine epsilon}, we get $\varepsilon_{\bfi}^J=(-1)^{|N_{\bfi}\cap J|}$, where we recall that 
$N_{\bfi}=\{i_j\;|\;t^{(s)}_j-i_j\equiv1\bmod2,\, j\geq 2\}.$
By substituting $\alpha_s^J=(-1)^{|J|}$ in $\mathcal S_{s,\bfi}$, we have
$$\mathcal S_{s,\bfi}:\sum_{J\subseteq[2,k]}(-1)^{|N_\bfi\cap J|+|J|}=0,$$
which is verified by \Cref{lemma: even and odd}. By symmetry, the same argument works for all $s\in [d-k+2,d]$.

Now we want to better understand the matrix $M_s$ associated with the subsystem $\mathcal S_s$. The matrix $M_s$ has size $|\mathcal{A}_{\bfd ,s}|\times 2^{k-1}$, the first $k-1$ columns of $M_s$ are $( \varepsilon^{\{r\}}_{\bfi})_{\bfi}$ for $r=2,\dots,k$ and moreover all other columns are the Hadamard product of some of these. More precisely, the column labelled by $J$ is $(\varepsilon_{\bfi}^J)_{\bfi\in\A_{\bfd,s}}$ and thus, since 
$$\varepsilon_{\bfi}^J=\prod_{r\in J}\varepsilon_{\bfi}^{\{r\}},$$
the column $(\varepsilon_{\bfi}^J)_{\bfi\in\A_{\bfd,s}}$ is the Hadamard product of the columns $(\varepsilon_{\bfi}^{\{r\}})_{\bfi\in\A_{\bfd,s}}$ with $r\in J$. By \Cref{lemma: hadamard if greater than k-2}, the minimum $s$ such that the matrix $M_s$ has a submatrix that is a Hadamard matrix of size $2^{k-1}$ is $s=k-1$. Moreover, for any 
$k-1 \leq s\leq d-k+1$ the matrix $M_s$ still has a submatrix that is a Hadamard matrix of size $2^{k-1}$. Indeed, this easily follows from the fact that for all $s \in [k-1,d-k+1]$ there is a subset of $\A_{\bfd,s}$ in bijection with $\mathcal{A}_{\bfd,k-1}$. It follows that, since Hadamard matrices are full rank, the system $\mathcal S_s$ has a unique solution if $s\in [k-1,d-k+1]$. In particular, since $\mathcal S_s$ is homogeneous for all $s\in [k-1,d-k+1]\setminus \{ d-k \}$, then the unique solution of $\mathcal S_s$ is the zero solution for any $s\in [k-1,d-k+1]\setminus \{ d-k \}$. It remains to solve $\mathcal S_{d-k}$. Recall that
$$z_{\bfi}\left(\prod_{i=1}^kd_iW_{d_1}\otimes\dots\otimes W_{d_k}\right)=
\begin{cases}
1 & \text{if $\bfi=\bft^{(d-k)}$},\\
0 & \text{otherwise}
\end{cases},
$$
and $\varepsilon_{\bft^{(d-k)}}^J=1$ for any $J\subseteq[2,k]$. Thus 
$$\mathcal S_{d-k},\bft^{(d-k)}:\sum_{J\subseteq[2,k]}\alpha_{d-k}^J=1$$
and, for any $\bfi\neq \bft^{(d-k)}$, we have
$$\mathcal S_{d-k},\bfi:\sum_{J\subseteq[2,k]}\varepsilon_{\bfi}^J\alpha_{d-k}^J=0.$$
For $\bfi\neq\bft^{(d-k)}$ we have $N_\bfi\neq\emptyset$, thus, using lemma \Cref{lemma: determine epsilon} and \Cref{lemma: even and odd}, we have
$$\sum_{J\subseteq[2,k]}(\varepsilon_{\bfi}^J2^{1-k})=2^{1-k}\sum_{J\subseteq[2,k]}(-1)^{|N_{\bfi}\cap J|}=0.$$
Finally, noting that
$$\sum_{J\subseteq[2,k]}2^{1-k}=2^{k-1}2^{1-k}=1,$$
we conclude that the unique solution of $\mathcal S_{d-k}$ is given by $\alpha_{d-k}^J=2^{1-k}$ for any $J\subseteq[2,k]$.
\end{proof}

Before proceeding further, let us clarify with our running example the results of \Cref{prop: mortale} by explicitly solving the system \eqref{eq: famoso sistema} and understanding how it actually splits in independent subsystems.

\begin{example}\label{example: step3}
Let $k=2$ and $\bfd=(3,3)$. We write down and solve the linear system 
$$\mathcal S:9W_3\otimes W_3=T+T^{\{2\}},$$
where $T\in\spann(\mathcal C_{(3,3)})$ and $T^{\{2\}}\in\spann\mathcal C_{(3,3)}^{\{2\}}$.
By \Cref{example: step2}, we have
$$T=(\alpha_0,\alpha_1,\alpha_1,\alpha_2,\alpha_2,\alpha_2,\alpha_3,\alpha_3,\alpha_3,\alpha_3,\alpha_4,\alpha_4,\alpha_4,\alpha_5,\alpha_5,\alpha_6),$$
$$T^{\{2\}}=(\alpha_0^{\{2\}},\alpha_1^{\{2\}},-\alpha_1^{\{2\}},\alpha_2^{\{2\}},-\alpha_2^{\{2\}},\alpha_2^{\{2\}},\alpha_3^{\{2\}},-\alpha_3^{\{2\}},\alpha_3^{\{2\}},-\alpha_3^{\{2\}},-\alpha_4^{\{2\}},\alpha_4^{\{2\}},-\alpha_4^{\{2\}},\alpha_5^{\{2\}},-\alpha_5^{\{2\}},\alpha_6^{\{2\}}),$$
and, since the only non-zero coordinate of $W_3\otimes W_3$ is $z_{2,2}$, the matrix of the linear system is
\[
\setlength{\arraycolsep}{2pt}
\mathcal S:\begin{pmatrix}
1 & 1 & & & & \\
& & \begin{bmatrix}1 & 1 \\ 1 & -1\end{bmatrix} & & & & \\
& & & \ddots & & & \\
& & & &\begin{bmatrix}1 & 1 \\ 1 & -1\end{bmatrix} & & \\
& & & & & 1 & 1
\end{pmatrix} \begin{pNiceArray}{c}
\alpha_0\\
\alpha_0^{\{2\}}\\
\vdots\\
%\alpha_1\\
%\alpha_1^{\{2\}}\\
\vdots \\
%\alpha_2\\
%\alpha_2^{\{2\}}\\
%\alpha_3\\
%\alpha_3^{\{2\}}\\
%\alpha_4\\
%\alpha_4^{\{2\}}\\
\alpha_6\\
\alpha_6^{\{2\}}
\end{pNiceArray}=\begin{pmatrix}
 0\\
 0\\
% 0\\
 \vdots\\
 \vdots\\
 0\\
 0
\end{pmatrix}
\]where we already removed repeated equations. The systems $\mathcal S$ splits into the independent systems
\begin{gather*}\mathcal S_0:\alpha_0+\alpha_0^{\{2\}}=0,\quad \mathcal S_1:\begin{cases}
\alpha_1+\alpha_1^{\{2\}}=0\\
    \alpha_1-\alpha_1^{\{2\}}=0\\    
\end{cases},\quad \mathcal S_2:\begin{cases}
\alpha_2+\alpha_2^{\{2\}}=0\\
    \alpha_2-\alpha_2^{\{2\}}=0\\
\end{cases},\quad \mathcal S_3:\begin{cases}
\alpha_3+\alpha_3^{\{2\}}=0\\
    \alpha_3-\alpha_3^{\{2\}}=0\\    
\end{cases},\\
\mathcal S_4:\begin{cases}
\alpha_4+\alpha_4^{\{2\}}=1\\
    \alpha_4-\alpha_4^{\{2\}}=0\\
\end{cases}, \quad \mathcal S_5:\begin{cases}
\alpha_5+\alpha_5^{\{2\}}=0\\
    \alpha_5-\alpha_5^{\{2\}}=0\\
\end{cases}, \quad \mathcal S_6:\alpha_6+\alpha_6^{\{2\}}=0.
\end{gather*}
As predicted by \Cref{prop: mortale}, $\mathcal S_s$ admits a non-trivial solution if and only if $s=0,4,6$ and the unique solution of $\mathcal S_4$ is $(\alpha_4,\alpha_4^{\{2\}})=(2^{-1},2^{-1})$. Moreover, the matrix of $\mathcal S_s$ is a Hadamard matrix for any $s=1,\dots,5$, as we expected by \Cref{lemma: hadamard if greater than k-2} and the proof of \Cref{prop: mortale}. More precisely, any solution of $\mathcal S$ is of the form
$$
\begin{array}{c} {\scriptstyle \alpha_0} \\ (\alpha_0 ,\end{array}
\begin{array}{c} {\scriptstyle \alpha_0^{\{2\}}} \\ -\alpha_0, \end{array}
\begin{array}{c} {\scriptstyle \alpha_1} \\ 0, \end{array}
\begin{array}{c} {\scriptstyle \alpha_1^{\{2\}}} \\ 0 ,\end{array}
\begin{array}{c} {\scriptstyle \alpha_2} \\ 0, \end{array}
\begin{array}{c} {\scriptstyle \alpha_2^{\{2\}}} \\ 0, \end{array}
\begin{array}{c} {\scriptstyle \alpha_3} \\ 0, \end{array}
\begin{array}{c} {\scriptstyle \alpha_3^{\{2\}}} \\ 0 ,\end{array}
\begin{array}{c} {\scriptstyle \alpha_4} \\ 2^{-1} ,\end{array}
\begin{array}{c} {\scriptstyle \alpha_4^{\{2\}}}  \\ 2^{-1}, \end{array}
\begin{array}{c} {\scriptstyle \alpha_5} \, \\ 0, \end{array}
\begin{array}{c} {\scriptstyle \alpha_5^{\{2\}}} \\ 0,\end{array}
\begin{array}{c} {\scriptstyle \alpha_6} \\ \alpha_6 ,\end{array}
\begin{array}{c} {\scriptstyle \alpha_6^{\{2\}}} \\ -\alpha_6) \end{array}.
$$

Substituting in the expression of the general elements $T$ and $T^{\{2\}}$, we have that $W_3\otimes W_3=T+T^{\{2\}}$ with $T\in\spann(\mathcal C_{(3,3)})$ and $T^{\{2\}}\in\spann\mathcal C_{(3,3)}^{\{2\}}$ given by
\begin{gather*}
T=\alpha_0x_{1,1}^3\otimes x_{2,1}^3+2^{-1}(3x_{1,0}^3\otimes x_{2,0}x_{2,1}^2+9x_{1,0}^2x_{1,1}\otimes x_{2,0}^2x_{2,1}+3x_{1,0}x_{1,1}^2\otimes x_{2,0}^3)+\alpha_6x_{1,0}^3\otimes x_{2,0}^3,\\
T^{\{2\}}=-\alpha_0x_{1,1}^3\otimes x_{2,1}^3+2^{-1}(-3x_{1,0}^3\otimes x_{2,0}x_{2,1}^2+9x_{1,0}^2x_{1,1}\otimes x_{2,0}^2x_{2,1}-3x_{1,0}x_{1,1}^2\otimes x_{2,0}^3)-\alpha_6x_{1,0}^3\otimes x_{2,0}^3.
\end{gather*}

\end{example}

\section{Computing the bounds}\label{section: computing the bounds}

This section is devoted to prove our main \Cref{theorem: bound rango}. Before proceeding, we briefly recall some elements of the theory of catalecticants that will be needed later; for a more detailed account, we refer for instance to \cite[Chapter 1]{IarrobinoKanev} or to the survey \cite{guida}. We also briefly recall the classical Sylvester's algorithm \cite{sylvesterAlgo} used to compute the symmetric rank decomposition of a homogeneous polynomial and we refer to \cite{comas2011Sylv, sylvMod} for a detailed modern treatment.

\subsection*{Catalecticants and Sylvester's algorithm}
Let $F\in \CC[u,v]_{e}$ be a degree $e$ homogeneous polynomial:
$$
F=\sum_{i=0}^e a_i \binom{e}{i}u^{e-i}v^i.
$$
Let us briefly explain the Sylvester's algorithm to compute the symmetric rank of $F$, i.e., the minimum integer $\rk_s(F)$ such that
$F=\sum_{i=1}^{\rk_s(F)}L_i^d$ for linear forms $L_i$, as well as to get a length $\rk_s(F)$ decomposition of $F$. We also remark that for a binary form $F$, we have that $\rk_s(F)=\rk(F)$ \cite{zhang2016comon}, so from now on we will just talk about the rank of a binary form. 

Let $\CC[\partial_u,\partial_v]_m$ be the space of homogeneous polynomials of degree $m$ in the variables $\partial_u,\partial_v$. For $m\leq e$, the \emph{m-catalecticant map} of $F$ is the linear map $ \Cat_{m}(F):\CC[\partial_u,\partial_v]_{m}\rightarrow \CC[u,v]_{e-m}$ defined by derivation. If we fix basis $(\partial_u^{i}\partial_v^{m-i})_{i=0,\dots,m}$ on the domain and $\left(\binom{e-m}{i}u^{e-m-i}v^i\right)_{i=0,\dots,m}$ on the codomain, and we denote 
$$
\Cat_{m}(F)=\begin{bmatrix}
    a_0 & \cdots & a_{m}\\
    \vdots & & \vdots \\
    a_{e-m}& \dots & a_{e}
\end{bmatrix},
$$
then the matrix associated to the catalecticant map is $e\cdot \Cat_m(F)$.
With a slight abuse of notation, from now on we denote by $\Cat_m(F)$ both the linear operator and the corresponding matrix where we fixed basis as just explained. Clearly $\Cat_m(F)=(\Cat_{e-m}(F))^t$ and moreover $\rk(\Cat_m(F))\leq \rk(\Cat_{m+1}(F))$ for all $m\leq e/2$. Catalecticant matrices give also a bound for the rank of $F$ since $\rk(\Cat_{m}(F))\leq \rk(F)$ for all $m$. Hence, it is natural to directly look at the so-called \emph{most square} catalecticant matrix $\Cat_{m}(F)$, where $m=\lfloor e/2\rfloor$.

Let $F\in \CC[u,v]_e$ and let $r=\rk(\Cat_{\lfloor e/2\rfloor}(F))$. Then, by \cite{comas2011Sylv}, we have
\begin{align}\label{eq: algo sylvester}
   \rk(F)=\begin{cases}
    r, & \text{ if } \ker(\Cat_{r}(F)) \text{ contains a square-free element}\\
e-r+2, & \text{otherwise}.
\end{cases}
\end{align}
 In the first case, a decomposition of $F$ is found by looking at the distinct roots $(\alpha_i,\beta_i)$ of a square-free element of $\ker(\Cat_{r}(F))$ seen as a degree-$r$ polynomial. To get a decomposition of length $\rk(F)$, one has to solve the linear system $F=\lambda_1 (\alpha_1u+\beta_1v)^e+\cdots +\lambda_r(\alpha_r u+\beta_rv)^e$ and find all $\lambda_i$'s.

In the second case, a similar procedure is applied by looking at a square-free element of $\ker(\Cat_{d-e+2}(F))$, computing its distinct roots and solving the corresponding linear system to find the coefficient of the forms providing a rank-decomposition. 

It is known that, if $r$ is the rank of the most square catalecticant matrices then, $\dim(\ker(\Cat_r))=2$ if $2r-2=e$ and $\dim(\ker(\Cat_r))=1$ otherwise.

\subsection*{Computing the bounds}
Now we minimize the rank of a family of binary forms that will be used to compute the bound of  \Cref{theorem: bound rango}.  
\begin{proposition}\label{prop: rango TJ}
Let $k\geq 2$, let $\bfd\in \NN_{\geq 3}^k$ and let $d=d_1+\cdots +d_k$. Let $\mathcal{F}\subseteq \CC[u,v]_d$ be the family of degree $d$-homogenous polynomial defined as 
$$
\mathcal{F}= \left\{  F=\sum_{i=0}^{k-2}a_i\binom{d}{i}u^{d-i}v^i+\binom{d}{d-k}a_{d-k}u^{k}v^{d-k}+\sum_{i=d-k+2}^d a_i \binom{d}{i}u^{d-i}v^i, \text{ for }a_i\in \CC \text{ with } a_{d-k}\neq 0 \right\}.
$$
Then 
$$
\min_{F\in\mathcal F}\rk F=d-2k+2.
$$
In particular, if  $2k\leq\left\lfloor \frac d2\right\rfloor +1$ the minimum is achieved by any $F\in\mathcal F$ with $a_{k-2}\neq 0$. Otherwise, the minimum is achieved by any $F\in \mathcal{F}$ with $a_{k-2}\neq 0$ and $a_i=0$ for $i\in \{0,\dots,k-3\} \cup \{d-k+2,\dots,d \}$.
\end{proposition}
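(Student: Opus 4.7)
The plan is to prove $\min_{F\in\mathcal F}\rk F = d-2k+2$ by matching a general lower bound with an explicit construction, and then derive the two ``in particular'' refinements from the proof. Throughout set $n = d-2k+2$.

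For the lower bound, suppose $F\in\mathcal F$ has rank $r$ and fix a minimal decomposition $F = \sum_{i=1}^{r} L_i^d$ with pairwise non-proportional $L_i = \alpha_i u + \beta_i v$. At most one $L_i$ is a scalar multiple of $u$ (contributing only to $a_0$) and one of $v$ (only to $a_d$); let $A$ index the remaining summands, on which $\alpha_i\beta_i\neq 0$, and set $\tau_i=\beta_i/\alpha_i$, so that the $\tau_i$ are pairwise distinct nonzero complex numbers. A direct expansion gives
$$a_j \;=\; \lambda\,[j=0] + \mu\,[j=d] + \sum_{i\in A}\nu_i\tau_i^j$$
for suitable scalars. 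Restricting to $j\in[k-1,d-k-1]$, a window of $d-2k+1$ consecutive indices missing both $0$ and $d$, the left-hand side vanishes and we obtain $\sum_{i\in A}\nu_i\tau_i^j = 0$ at these $d-2k+1$ indices. If $|A|\leq d-2k+1$, any $|A|$ of these equations form a square Vandermonde system in the distinct nonzero $\tau_i$, hence nonsingular, forcing $\nu_i=0$ for all $i\in A$; but then $a_{d-k}$ would also vanish, contradicting $a_{d-k}\neq 0$. Therefore $r\geq|A|\geq n$.

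For the matching upper bound, consider
$$F_0 \;=\; a_{k-2}\binom{d}{k-2} u^{d-k+2}v^{k-2} + a_{d-k}\binom{d}{d-k} u^k v^{d-k} \in \mathcal F, \qquad a_{k-2},\,a_{d-k}\neq 0,$$
and use the ansatz $P = c\partial_u^n + \partial_v^n$. Since $d\geq 3k$ forces $n>k$ and $n>k-2$, the operator $\partial_u^n$ annihilates $u^k v^{d-k}$ and $\partial_v^n$ annihilates $u^{d-k+2}v^{k-2}$; the two surviving contributions are both scalar multiples of $u^k v^{k-2}$, and a short binomial simplification shows that they cancel precisely when $c = -a_{d-k}/a_{k-2}$, placing $P$ in $\mathrm{Ann}(F_0)$. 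Since $c\neq 0$, $P$ has $n$ distinct roots (the $n$-th roots of $-c$) and is squarefree, so by apolarity (implicit in Sylvester's recipe \eqref{eq: algo sylvester}) the polynomial $F_0$ admits a Waring decomposition of length $n$, giving $\rk F_0 \leq n$. Combined with the lower bound, $\rk F_0 = n$, proving the stated equality.

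Finally, for the ``in particular'' clauses: in the regime $2k>\lfloor d/2\rfloor + 1$ the polynomial $F_0$ already satisfies the prescribed vanishing pattern, so nothing further is required. In the regime $2k\leq\lfloor d/2\rfloor + 1$, every $F\in\mathcal F$ with $a_{k-2}\neq 0$ attains the bound, which I would prove by computing $\rk\Cat_{\lfloor d/2\rfloor}(F)$. Here $\lfloor d/2\rfloor\geq 2k-1$, so the two nonzero anti-diagonals $j+\ell=k-2$ and $j+\ell=d-k$ live in disjoint row and column blocks, and one extracts a $2k\times 2k$ block-anti-diagonal submatrix whose two ``cross'' blocks are identically zero (the indices occurring there lie outside the support $S$) and whose two diagonal blocks are Hankel anti-triangular matrices. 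A sum-of-indices bookkeeping—the identity $\sum_{j}(j+\tau(j))=(k-1)(k-2)$ for the upper-left block, resp.\ $(k+1)(d-k)$ for the lower-right one, forces only the main anti-diagonal permutation to contribute—yields determinants $\pm a_{k-2}^{k-1}$ and $\pm a_{d-k}^{k+1}$. Thus $\rk\Cat_{\lfloor d/2\rfloor}(F)\geq 2k$, and combined with the automatic $\rk\Cat_{\lfloor d/2\rfloor}(F)\leq\lfloor d/2\rfloor + 1\leq n$, Sylvester's recipe pins $\rk F\in\{2k,n\}$; the lower bound then collapses this to $\rk F = n$. The main technical hurdle I anticipate is this sum-of-indices bookkeeping in the two diagonal blocks, which must be carried out uniformly across the parity of $d$ and across which of the auxiliary entries $a_0,\ldots,a_{k-3}$ and $a_{d-k+2},\ldots,a_d$ actually enter the submatrix.
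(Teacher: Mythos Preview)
Your argument is correct and takes a genuinely different, more elementary route than the paper. The paper proves both the lower and upper bounds entirely through Sylvester's algorithm: it computes the exact rank of $\Cat_{\lfloor d/2\rfloor}(F)$ via a block decomposition (with substantial casework on whether $2k\le\lfloor d/2\rfloor+1$ or not, since in the second regime the blocks overlap) and then identifies the kernel of the relevant $\Cat_r(F)$ to decide which branch of \eqref{eq: algo sylvester} applies. Your proof bypasses almost all of this. Your Vandermonde lower bound on $\rk F$ is a direct argument on decompositions that never touches catalecticants and works uniformly in both regimes; your upper bound via the explicit apolar form $c\partial_u^{n}+\partial_v^{n}$ replaces the paper's entire Case~2 catalecticant analysis with a two--line computation. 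What the paper's approach buys in exchange is exact information on $\rk\Cat_{\lfloor d/2\rfloor}(F)$ (hence on border rank) for every $F\in\mathcal F$, not just the minimisers.

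Two small points. First, in your treatment of the case $2k\le\lfloor d/2\rfloor+1$ the intermediate claim ``Sylvester's recipe pins $\rk F\in\{2k,n\}$'' is not what your catalecticant bound yields: you only show $r:=\rk\Cat_{\lfloor d/2\rfloor}(F)\ge 2k$, not $r=2k$. But this is harmless for your purposes, since $2k\le r\le \lfloor d/2\rfloor+1\le n$ gives $r\le n$ and $d-r+2\le n$, so either branch of Sylvester already forces $\rk F\le n$; the Vandermonde lower bound then closes to $\rk F=n$. Second, the ``sum-of-indices bookkeeping'' you flag as a hurdle is in fact immediate: under $2k\le\lfloor d/2\rfloor+1$ the off-diagonal blocks of your $2k\times 2k$ submatrix are zero because the index range $[\lfloor d/2\rfloor-k,\,k-2+\lfloor d/2\rfloor]$ misses the support, and each diagonal block is a Hankel matrix which is anti-triangular with constant anti-diagonal entry $a_{k-2}$ (resp.\ $a_{d-k}$), so the determinants are $\pm a_{k-2}^{k-1}$ and $\pm a_{d-k}^{k+1}$ without any permutation-sum argument. (Also, ``block-anti-diagonal'' should read ``block-diagonal''.)
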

\begin{proof}
Assume $2k\leq\left\lfloor \frac d2\right\rfloor +1$. The most square catalecticant matrix associated to $F$ is 
$$
\Cat_{\left\lfloor\frac{d}{2}\right\rfloor}(F)=\begin{pmatrix}
M & 0 & 0\\
0 & 0 & 0\\
0 & 0 & N
\end{pmatrix}
$$
where
\begin{align}\label{eq: M ed N}
M=\begin{bmatrix}
a_0 &  \dots & a_{k-3}&  a_{k-2} \\
a_1 &  & \iddots & 0\\
\vdots & \iddots & \iddots& \vdots\\
a_{k-2}& 0& \dots & 0
\end{bmatrix}\in\CC^{(k-1)\times(k-1)}, \quad 
N=\begin{bmatrix}
   0& \dots & \dots & 0& a_{d-k}\\
   \vdots& & &\iddots &0\\
   \vdots& & \iddots&\iddots & a_{d-k+2}\\
   0 & \iddots& \iddots &\iddots & \vdots \\
   a_{d-k}& 0& a_{d-k+2} & \dots & a_{d}
\end{bmatrix}\in\CC^{(k+1)\times(k+1)}.\end{align}
Since $k-1+k+1=2k\leq\left\lfloor\frac d2\right\rfloor+1$, $M$ and $N$ do not share any row or any column. Moreover, since $\rk N=k+1$, it follows that $\rk \Cat_{\left\lfloor\frac d2\right\rfloor}(F)=k+1+\rk M$. By Sylvester algorithm (\cref{eq: algo sylvester}), we get that $\rk F=k+1+\rk M$ if $\ker(\Cat_{k+1+\rk M}(F))$ contains a square-free element, and $\rk F=d-(k+1+\rk M)+2$ otherwise.

First, notice that $\rk M \in [0,k-1]$ and $\rk M=i$ if and only if $a_{i-1}\neq 0$ and $a_{i}=\cdots =a_{k-2}=0$; in particular, if $\rk M=i$, then the smallest non-zero sub-matrix of $M$ is made of the first $i$ columns and $i$ rows.
For all $i=0,\dots,k-1$ both $M$ and $N$ are a blocks of the catalecticant matrix $\Cat_{k+1+i}(F)$ and the assumption $2k\leq\left\lfloor \frac d2\right\rfloor +1$ guarantees that $N$ and the non-zero sub-matrix of $M$ do not share any row or column, unless $k=\frac{d+2}{4}$. 

First, assume $k\neq\frac{d+2}{4}$. The shape of $\Cat_{k+1+i}(F)$ implies that $\ker(\Cat_{k+1+i}(F)) =\langle\partial_u^{k+1}\partial_v^i\rangle$, and thus $\rk F=d-(k+1+\rk M)+2$. Hence, in this case $\displaystyle\min_{F\in\mathcal F}\rk F$ is obtained for $\rk M=k-1$ and $\displaystyle\min_{F\in\mathcal F}\rk F=d-2k+2$. 

Assume now $k=\frac{d+2}{4}$. Repeating the same argument, it is easy to see that for any $i=0,\dots,k-2$ we have that $\rk F=d-(k+1+\rk M)+2$ and for $i=k-1$ we have $\rk F=\frac{d+2}{2}$. Hence, also in this case $\displaystyle\min_{F\in\mathcal F}\rk F=d-2k+2$.

\noindent Assume now $2k>\left\lfloor \frac d2\right\rfloor+1$ and set
$$h_1=2k-\left\lfloor \frac d2\right\rfloor-1, \quad h_2=2k-\left\lceil \frac d2\right\rceil-1.$$
In this case the matrix associated with the most square catalecticant map $\Cat_{\left\lfloor\frac{d}{2}\right\rfloor}(F)$ has the form
\NiceMatrixOptions{cell-space-limits = 2pt}
\begin{equation*}\Cat_{\left\lfloor\frac{d}{2}\right\rfloor}(F)=\begin{pNiceArray}{cc|c|cc}
M_{1,1} & M_{1,2} & M_2 & 0 & 0\\
M_{1,3} & M_{1,4} & 0 & 0 & 0\\ \hline 
M_3 & 0 & 0 & 0 & N_3\\ \hline
0 & 0 & 0 & N_{1,4} & N_{1,2}\\
0 & 0 & N_2 & N_{1,3} & N_{1,1}
\end{pNiceArray},
\end{equation*}
where
\begin{gather*}
M_{1,4}\in\CC^{(d-3k+1)\times(d-3k+1)},\quad M_2\in\CC^{h_1\times h_1},\quad M_3\in\CC^{h_2\times h_2},\\
N_{1,4}\in\CC^{(d-3k+3)\times(d-3k+3)},\quad N_2\in \CC^{h_1\times h_1},\quad N_3\in\CC^{h_2\times h_2},
\end{gather*}
and, defining $M$ and $N$ as in \cref{eq: M ed N}, we have 
\begin{equation*}
\begin{pNiceArray}{cc|c}
M_{1,1} & M_{1,2} & M_2 \\
M_{1,3} & M_{1,4} & 0 \\ \hline
M_3 & 0 & 0
\end{pNiceArray}=M, \hbox{ and }\begin{pNiceArray}{c|cc}
 0  & 0 & N_3\\ \hline
 0 & N_{1,4} & N_{1,2}\\
 N_2 & N_{1,3} & N_{1,1}
\end{pNiceArray}=N.
\end{equation*}
As before, we want to bound the $\displaystyle \min_{F \in \mathcal F}\rk F$ and so we specialize the form $F$ by imposing the following conditions on its coefficients: $$
\begin{cases}
 a_0=a_1=\dots=a_{k-3}=0, \\
 a_{d-k+2}=a_{d-k+3}=\dots=a_{d}=0.
\end{cases}
$$
Using the fact that, by hypothesis $a_{d-k}\neq 0$, the previous conditions on the coefficients gives $$\rk \Cat_{\left\lfloor\frac{d}{2}\right\rfloor}(F)=\left\lfloor\frac{d}{2}\right\rfloor+1-h_2$$ for any choice of $a_{k-2} \neq 0$. 
Indeed, the first $h_2$ columns and the last $h_2$ columns of $\Cat_{\left\lfloor\frac{d}{2}\right\rfloor}(F)$ are proportional to each other and removing the last $h_2$ columns of $\Cat_{\left\lfloor\frac{d}{2}\right\rfloor}(F)$ we get
\begin{equation*}\begin{pNiceArray}{cc|c|c}
0 & 0 & M_2 & 0 \\
0 & M_{1,4} & 0 & 0 \\ \hline 
M_3 & 0 & 0 & 0 \\ \hline
0 & 0 & 0 & N_{1,4} \\
0 & 0 & N_2 & 0
\end{pNiceArray}
\end{equation*}
with $M_{1,4},\,M_2,\,M_3,\,N_{1,4},\,N_2$ anti-diagonal matrices with non-zero anti-diagonal entries $a_{k-2}$ for $M_{1,4},\,M_2,$ $M_3$ and $a_{d-k}$ for $N_{1,4},\,N_2$.

Now, following Sylvester's algorithm we have to look if  $\ker(\Cat_{\left\lfloor\frac{d}{2}\right\rfloor+1-h_2}(F))$ contains a square-free binary form of degree $\left\lfloor\frac{d}{2}\right\rfloor+1-h_2$. Note that $\Cat_{\left\lfloor\frac{d}{2}\right\rfloor+1-h_2}(F)$ can be obtained by $\Cat_{\left\lfloor\frac{d}{2}\right\rfloor}(F)$ by cut and paste the last $h_2-1$ columns of $\Cat_{\left\lfloor\frac{d}{2}\right\rfloor}(F)$ as last $h_2-1$ rows of $\Cat_{\left\lfloor\frac{d}{2}\right\rfloor+1-h_2}(F)$, with the first $h_2-1$ bottom-left entries removed. By the previous description, the columns $A_1$ and $A_{\left\lfloor\frac{d}{2}\right\rfloor+2-h_2}$ of $\Cat_{\left\lfloor\frac{d}{2}\right\rfloor+1-h_2}(F)$ are proportional, in particular $a_{k-2}A_{\left\lfloor\frac{d}{2}\right\rfloor+1}=a_{d-k}A_1$. Hence, since 
$$\dim(\ker(\Cat_{\left\lfloor\frac{d}{2}\right\rfloor+1-h_2}(F)))=1,$$
we get
$$\ker(\Cat_{\left\lfloor\frac{d}{2}\right\rfloor+1-h_2}(F))=\langle a_{d-k}\partial ^{{\left\lfloor\frac{d}{2}\right\rfloor+1-h_2}}_u-a_{k-2}\partial^{\left\lfloor\frac{d}{2}\right\rfloor+1-h_2}_v\rangle.$$
It follows that, for any non-zero choice of $a_{k-2} \neq 0$, there is always a square-free binary form in  $\ker(\Cat_{\left\lfloor\frac{d}{2}\right\rfloor+1-h_2}(F))$.
This shows that
$$\min_{F \in \mathcal F}\rk F \leq \left\lfloor\frac{d}{2}\right\rfloor+1-h_2=d-2k+2.$$ To prove that the equality holds, we have to show that, for any $F\in\mathcal F$ and any $k+1<r<d-2k+2$, if $\rk\Cat_{\left\lfloor\frac{d}{2}\right\rfloor}(F)=r$ then $\Cat_r(F)$ has no square-free element in the kernel. Since $r<d-2k+2$, in order to have $\rk\Cat_{\left\lfloor\frac{d}{2}\right\rfloor}(F)=r$ we have to impose that all the $(d-2k+2)\times(d-2k+2)$ minors of $\Cat_{\left\lfloor\frac{d}{2}\right\rfloor}(F)$ are zero. In particular, starting from the minor
\begin{equation*}\begin{pNiceArray}{c|c|cc}
M_{1,4} & 0 & 0 & 0\\ \hline 
0 & 0 & 0 & N_3\\ \hline
0 & 0 & N_{1,4} & N_{1,2}\\
0 & N_2 & N_{1,3} & N_{1,1}
\end{pNiceArray}
\end{equation*}
and noting that $M_{1,4}$ is anti-upper-triangular with all the anti-diagonal elements equal to $a_{k-2}$, we get $a_{k-2}=0$. Considering the minor obtained by shifting the first $d-3k+1$ columns of the previous one to the left by one in $\Cat_{\left\lfloor\frac{d}{2}\right\rfloor}(F)$, we obtain $a_{k-3}=0$. By repeating this procedure, and shifting the first $d-3k+1$ rows of the minor upwards in $\Cat_{\left\lfloor\frac{d}{2}\right\rfloor}(F)$ when necessary, we find $a_0=\dots=a_{k-2}=0$. Thus, if $\rk F<d-2k+2$ then 
\begin{equation*}\Cat_{\left\lfloor\frac{d}{2}\right\rfloor}(F)=\begin{pNiceArray}{cc|c|cc}
0 & 0 & 0 & 0 & 0\\
0 & 0 & 0 & 0 & 0\\ \hline 
0& 0 & 0 & 0 & N_3\\ \hline
0 & 0 & 0 & N_{1,4} & N_{1,2}\\
0 & 0 & N_2 & N_{1,3} & N_{1,1}
\end{pNiceArray}
\end{equation*}
and, in particular, $\rk \Cat_{\left\lfloor\frac{d}{2}\right\rfloor}(F)=k+1$ (recall that $N$ has always rank equal to $k+1$). The matrix $N$ is a sub-matrix of $\Cat_{k+1}(F)$, i.e.
\begin{equation*}
\Cat_{k+1}=\begin{pNiceArray}{c|c}
0 & 0 \\ \hline
0 & N
\end{pNiceArray}
\end{equation*}
and, since $N$ is full-rank, we get 
$\ker\Cat_{k+1}=\langle\partial_u^{k+1}\rangle,$ and this concludes the proof. 
\end{proof}

\begin{remark}\label{remark: multi hom binary in forma binaria emplice}
Let $J\subseteq [2,k]$,  $\bfd\in\NN_{\geq 3}^k$, and denote by  $V_{\bfd}^J\subset\sym^{d_1}\CC^2\otimes\cdots\otimes\sym^{d_k}\CC^2$
the vector subspace such that $\PP(V_\bfd^J)=\spann(\mathcal C_\bfd^J).$
By \Cref{remark: free parameters rnc}, a basis of $V_\bfd^J$ is given by
$$\left(\sum_{\bfi\in\mathcal A_{\bfd,s}}\varepsilon_\bfi^J\prod_{j=1}^k\binom{d_j}{i_j}x_{1,0}^{i_1}x_{1,1}^{d_1-i_1}\otimes\cdots\otimes x_{k,0}^{i_k}x_{k,1}^{d_k-i_k}\right)_{s=0,\dots,d},$$
for suitable $\varepsilon_\bfi^J\in\{-1,1\}.$ Since $\mathcal C_\bfd^J$ is a rational normal curve, $V_\bfd^J$ is isomorphic to the vector space of binary forms of degree $d$, and an isomorphism is given by
$$\begin{array}{cccc}
   \varphi^J: &  V_\bfd^J & \longrightarrow &\CC[u,v]_{d}\\
   &  \\
   & \sum_{\bfi\in\mathcal A_{\bfd,s}}\varepsilon_\bfi^J\prod_{j=1}^k\binom{d_j}{i_j}x_{1,0}^{i_1}x_{1,1}^{d_1-i_1}\otimes\cdots\otimes x_{k,0}^{i_k}x_{k,1}^{d_k-i_k}
 &\mapsto
    & \binom{d}{i_1+\dots+i_k}u^{d-(i_1+\cdots +i_k)}v^{i_1+\cdots +i_k}
\end{array}.$$
The isomorphism $\varphi^J$ sends the points of $\mathcal C_\bfd^J$ in powers of linear forms. Hence, the $\mathcal C_\bfd^J$-rank of any element $T^J\in\spann(\mathcal C_\bfd^J)$ is equal to the (symmetric) rank of $\varphi^J(T^J)$.
\end{remark}

We have now all the tools to prove our main \Cref{theorem: bound rango}, which we restate below for convenience.
\begin{theorem*}(\Cref{theorem: bound rango}) %\label{theorem: bound rango}
Let $k\geq 2$, $\bfd\in \NN_{\geq 3}^k$ and $d=d_1+\cdots +d_k$.  Then
$$
\rk_{\bfd}(W_{d_1}\otimes \cdots \otimes W_{d_k})\leq  2^{k-1}(d-2k+2).
$$
\end{theorem*}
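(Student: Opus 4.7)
The plan is to assemble the pieces already established in Section~\ref{section: solving the system} together with the binary-form rank bound of Proposition~\ref{prop: rango TJ}. First, I would invoke Remark~\ref{remark: W-state sta nel quadrello} to conclude that $\prod_{j=1}^k d_j \cdot W_{d_1}\otimes\cdots\otimes W_{d_k}$ lies in $\spann\{\cup_{J\subseteq[2,k]}\mathcal C_\bfd^J\}$, so it admits a decomposition of the form $\sum_{J\subseteq[2,k]} T^J$ with $T^J\in\spann(\mathcal C_\bfd^J)$. Next, I would apply Proposition~\ref{prop: mortale} to exhibit such a decomposition explicitly, using the freedom available in each homogeneous subsystem $\mathcal S_s$ (for $s\in[0,k-2]\cup[d-k+2,d]$) to set $\alpha_{k-2}^J=(-1)^{|J|}$ and $\alpha_s^J=0$ for all other values of $s$ in that range; the inhomogeneous subsystem $\mathcal S_{d-k}$ is uniquely solved by $\alpha_{d-k}^J=2^{1-k}$.

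With this choice, Remark~\ref{remark: multi hom binary in forma binaria emplice} identifies each $T^J$ with the binary form
$$F^J=\varphi^J(T^J)=(-1)^{|J|}\binom{d}{k-2}u^{d-k+2}v^{k-2}+2^{1-k}\binom{d}{d-k}u^{k}v^{d-k},$$
which belongs to the family $\mathcal F$ of Proposition~\ref{prop: rango TJ} with $a_{k-2}=(-1)^{|J|}\neq 0$ and $a_{d-k}=2^{1-k}\neq 0$. That proposition therefore yields $\rk(F^J)\leq d-2k+2$. Since $\varphi^J$ sends $\mathcal C_\bfd^J$-rank to symmetric rank of binary forms, $\rk_{\mathcal C_\bfd^J}(T^J)\leq d-2k+2$. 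Moreover, every point of $\mathcal C_\bfd^J=sv_\bfd(L^J)$ is an elementary partially symmetric tensor $p_1^{\otimes d_1}\otimes\cdots\otimes p_k^{\otimes d_k}$, so any decomposition of $T^J$ on $\mathcal C_\bfd^J$ is automatically a partially symmetric decomposition of $T^J$.

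Summing over the $2^{k-1}$ subsets $J\subseteq[2,k]$, and noting that rescaling by $\prod_j d_j$ does not affect rank, this yields
$$\rk_\bfd(W_{d_1}\otimes\cdots\otimes W_{d_k})\leq \sum_{J\subseteq[2,k]}\rk_{\mathcal C_\bfd^J}(T^J)\leq 2^{k-1}(d-2k+2).$$
The principal technical obstacle has already been resolved in Proposition~\ref{prop: rango TJ}, whose catalecticant/Sylvester analysis (with its case split $2k\leq\lfloor d/2\rfloor+1$ versus $2k>\lfloor d/2\rfloor+1$) supplies the essential bound on $\rk(F^J)$; at the present stage this is painless because our specific two-term choice of $F^J$ achieves the minimum in both cases. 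The remainder of the proof is bookkeeping: checking that the binary-form decomposition of $F^J$ pulls back under $\varphi^J$ to a genuine partially symmetric decomposition of $T^J$ supported on $\mathcal C_\bfd^J$, and summing the bounds over $J$.
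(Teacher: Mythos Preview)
Your proposal is correct and follows essentially the same route as the paper's proof: decompose $\prod_j d_j\, W_{d_1}\otimes\cdots\otimes W_{d_k}$ as $\sum_J T^J$ via Remark~\ref{remark: W-state sta nel quadrello} and Proposition~\ref{prop: mortale}, push each $T^J$ through $\varphi^J$ to a binary form in the family $\mathcal F$, and invoke Proposition~\ref{prop: rango TJ}. Your explicit choice $\alpha_{k-2}^J=(-1)^{|J|}$, $\alpha_{d-k}^J=2^{1-k}$, and $\alpha_s^J=0$ otherwise is precisely the specialization the paper makes in Remark~\ref{remark: commento algo}, and it lands in the minimizing subfamily of Proposition~\ref{prop: rango TJ} in both regimes of $2k$ versus $\lfloor d/2\rfloor+1$.
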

\begin{proof}
By \Cref{remark: W-state sta nel quadrello} $W_{d_1}\otimes\cdots   \otimes W_{d_k}$ belongs to $\spann\left\{  \cup_{J\subseteq [2,k]}  \mathcal{C}_{\bfd}^J\right\} $, so we have
$$
\prod_{j=1}^kd_j W_{d_1}\otimes\cdots   \otimes W_{d_k}=\sum_{J\subseteq [2,k]}T^J, \,\hbox{ for some }T^J\in\spann\mathcal{C}_{\bfd}^J. 
$$
Writing $T^J$ with respect to the basis of \Cref{remark: multi hom binary in forma binaria emplice}, we have
$$T^J=\sum_{s=0}^{d}\alpha_s^J\left(\sum_{\bfi\in\mathcal A_{\bfd,s}}\varepsilon_\bfi^Jx_{1,0}^{i_1}x_{1,1}^{d_1-i_1}\otimes\cdots\otimes x_{k,0}^{i_k}x_{k,1}^{d_k-i_k}\right),$$
where the $\alpha^J_s$'s are a solution of the linear system $\mathcal S$ of \Cref{prop: mortale}.
Using the isomorphism $\varphi^J$ defined in \Cref{remark: multi hom binary in forma binaria emplice}, each $T^J$ can be seen as a binary form of degree $d$ in $\spann\mathcal C_\bfd^J$. More precisely, we have
$$\varphi^J(T^J)=\sum_{s=0}^d\alpha_s^J\binom{d}{s}u^{d-s}v^s.$$
It follows that $\rk_\bfd T^J\leq\rk\varphi^J(T^J)$, where $\rk\varphi^J(T^J)$ is the Waring rank of the binary form $\varphi^J(T^J)$ and, by \Cref{remark: multi hom binary in forma binaria emplice}, it is equal to the $\mathcal C_\bfd^J$-rank of $T^J$.
By \Cref{prop: mortale}, we have $\alpha_s^J=0$ for any $s\in[k-1,d-k-1]\cup\left\{d-k+1\right\}$ and $\alpha^J_{d-k}\neq 0$. Moreover, whether $2k\leq\left\lfloor\frac{d}{2}\right\rfloor+1$ or $2k>\left\lfloor\frac{d}{2}\right\rfloor+1$, by \Cref{prop: mortale} we can choose the $\alpha_s^J$'s such that $\varphi^J(T^J)$ is a minimizer as in \Cref{prop: rango TJ}, so that $\rk (\varphi^J(T^J))=d-2k+2$. Hence, we have
\begin{equation*}
\rk_{\bfd}(W_{d_1}\otimes \cdots \otimes W_{d_k})\leq\sum_{J\subseteq[2,k]}\rk_\bfd T^J\leq \sum_{J\subseteq[2,k]}\rk \varphi^J(T^J)= 2^{k-1}(d-2k+2).\qedhere
\end{equation*}
\end{proof}
Since for a tensor $T\in \sym^{d_1}\CC^2\otimes \cdots \otimes \sym^{d_k}\CC^2$ we have $\rk(T)\leq \rk_{\bf d}(T)$, the previous result gives a bound also on the tensor rank of $W_{d_1}\otimes \cdots \otimes W_{d_k}$.

\begin{corollary*}(\Cref{corollary: bound rango vero})
Let $k\geq 2$, let $\bfd\in \NN_{\geq 3}^k$ and let $d=d_1+\cdots+d_k$.  Then
$$
\rk_{}(W_{d_1}\otimes \cdots \otimes W_{d_k})\leq  2^{k-1}(d-2k+2).
$$
\end{corollary*}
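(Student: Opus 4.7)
The plan is short because this corollary follows immediately from \Cref{theorem: bound rango}. The key observation, already recorded in \Cref{section: preliminaries}, is the inequality $\rk(T)\leq \rk_{\bfd}(T)$ holding for every partially symmetric tensor $T\in \sym^{d_1}\CC^2\otimes \cdots \otimes \sym^{d_k}\CC^2$. This inequality is structural: any partially symmetric decomposition
$$
T=\sum_{i=1}^r (v^{(1)}_i)^{\otimes d_1}\otimes \cdots \otimes (v^{(k)}_i)^{\otimes d_k}
$$
is in particular a decomposition of $T$ as a sum of $r$ elementary tensors in $(\CC^2)^{\otimes d_1}\otimes\cdots\otimes (\CC^2)^{\otimes d_k}$, simply by regarding each symmetric power $(v^{(j)}_i)^{\otimes d_j}$ as the rank-one element $v^{(j)}_i\otimes\cdots\otimes v^{(j)}_i$ in $(\CC^2)^{\otimes d_j}$. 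Hence the tensor rank of $T$ is at most its partially symmetric rank.

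Applying this to $T=W_{d_1}\otimes \cdots \otimes W_{d_k}$, which lies in $\sym^{d_1}\CC^2\otimes \cdots \otimes \sym^{d_k}\CC^2$ because each $W_{d_i}$ is a symmetric tensor, I immediately obtain
$$
\rk(W_{d_1}\otimes \cdots \otimes W_{d_k})\leq \rk_{\bfd}(W_{d_1}\otimes \cdots \otimes W_{d_k})\leq 2^{k-1}(d_1+\cdots+d_k-2k+2),
$$
where the second inequality is exactly the content of \Cref{theorem: bound rango}. This yields the claimed bound and concludes the proof.

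Since the proof reduces to invoking \Cref{theorem: bound rango} together with the inclusion of decomposition types, there is no real obstacle here; the only conceptual step is recognizing that a symmetric (or partially symmetric) rank decomposition is a particular type of tensor rank decomposition, so numerical bounds transfer from the partially symmetric setting to the tensor rank setting with no loss.
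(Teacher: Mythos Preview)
Your proof is correct and matches the paper's approach exactly: the paper simply notes that $\rk(T)\leq \rk_{\bfd}(T)$ for any $T\in \sym^{d_1}\CC^2\otimes\cdots\otimes\sym^{d_k}\CC^2$ and invokes \Cref{theorem: bound rango}. Your added explanation of why the inequality holds (a partially symmetric decomposition is in particular a tensor decomposition) is a welcome elaboration of what the paper leaves implicit.
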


Let us continue our running example illustrating the content of \Cref{theorem: bound rango}.
\begin{example}\label{example: step4}
Let $k=2$ and $\bfd=(3,3)$. We want to apply the procedure of \Cref{theorem: bound rango} to find a decomposition of $W_3\otimes W_3$. We know that $9W_3\otimes W_3=T+T^{\{2\}}$, where $T\in\spann(\mathcal C_{(3,3)})$ and $T\in\spann(\mathcal C_{(3,3)}^{\{2\}})$ are as in \Cref{example: step3}. Applying $\varphi^J$ we find
$$\varphi(T)=\alpha_0u^6+2^{-1}\binom{6}{4}u^2v^4+\alpha_6v^6,\qquad \varphi^{\{2\}}(T^{\{2\}})=-\alpha_0u^6+2^{-1}\binom{6}{4}u^2v^4-\alpha_6v^6.$$
The matrix $\Cat_3(\varphi(T))$ with respect to binomial basis of $\mathbb C[u,v]_6$ is

$$
\Cat_3(\varphi(T))=\begin{pNiceArray}{cccc}
\alpha_0 & 0 & 0 & 0\\
0 & 0 & 0 & 2^{-1}\\
0 & 0 & 2^{-1} & 0\\
0 & 2^{-1} & 0 & \alpha_6
\end{pNiceArray}
$$
and it is full rank if and only if $\alpha_0\neq 0$. By \Cref{prop: rango TJ} we know that to minimise the Waring rank of $\varphi(T)$ it is enough to take $\alpha_0\neq0$, so we can also assume $\alpha_6=0$. At this point we already know that $\rk(\varphi(T))=4$, but to find an explicit decomposition of $\varphi(T)$ we have to find a square-free element in the kernel of $\Cat_4(\varphi(T))$. We have
$$\Cat_4(\varphi(T))=\begin{pNiceArray}{ccccc}
\alpha_0 & 0 & 0 & 0 & 2^{-1}\\
0 & 0 & 0 & 2^{-1} & 0\\
0 & 0 & 2^{-1} & 0 & 0
\end{pNiceArray}, \qquad \ker(\Cat_4(\varphi(T)))=\langle\partial_u^4-2\alpha_0\partial_v^4,\partial_v^3\partial_u\rangle.$$
Since the only condition on $\alpha_0$ is $\alpha_0\neq 0$, we can take $\alpha_0=2^{-1}$ so that $\partial_u^4-\partial_v^4\in\ker(\Cat_4(\varphi(T)))$. Let $\omega\in\CC$ be a primitive 4th root of unity. Since the roots of the polynomial $\partial_u^4-\partial_v^4$ are $[\omega^i,1]$ for $i=0,1,2,3$, by Sylvester algorithm there exist $\lambda_1,\dots,\lambda_4\in\CC$ such that 
$$\varphi(T)=\lambda_1(u+v)^6+\lambda_2(\omega u+v)^6+\lambda_3(\omega^2 u+v)^6+\lambda_4(\omega^3 u+v)^6,$$
and the $\lambda_i$'s are the solution of the linear system
$$\begin{pmatrix}
    1 & \omega^2 & 1 & \omega^2\\
    1 & \omega & \omega^2 & \omega^3\\
    1 & 1 & 1 &1\\
    1 & \omega^3 & \omega^2 & \omega
\end{pmatrix}\begin{pmatrix}
    \lambda_1\\
    \lambda_2 \\
    \lambda_3\\
    \lambda_4
\end{pmatrix}=\begin{pmatrix}
    2^{-1}\\
    0\\
    0\\
    2^{-1}
\end{pmatrix}.$$ 
Denoting by $\eta\in\CC$ a 4th primitive root of $-1$ and repeating the same procedure with $J=\{2\}$, we find $\mu_1,\dots,\mu_4\in\CC$ such that
$$\varphi^{\{2\}}(T^{\{2\}})=\mu_1(-u+v)^6+\mu_2(\eta u+v)^6+\mu_3(\eta^2 u+v)^6+\mu_4(\eta^3 u+v)^6.$$
Finally, going back with $\varphi^{-1}$ and $(\varphi^{\{2\}})^{-1}$ one finds the desired decomposition.
\end{example}

\Cref{example: step4}, together with \Cref{example: step1}, \Cref{example: step2} and \Cref{example: step3}, concretely shows that, following our procedure, it is possible to construct an explicit decomposition of $W_{d_1}\otimes\cdots\otimes W_{d_k}$ having length as in the bound of \Cref{theorem: bound rango} as we now explain in detail.
\begin{remark}\label{remark: commento algo}
Let $\bfd\in\NN^k_{\geq 3}$. For $s\in[0,d]\setminus\{d-k\}$ we choose one $\bf i \in \A_{\bf d,s }$ and set $\varepsilon^J_{\bf i}=1$ for any $J\subseteq[2,k]$, and for $s=d-k$ and $\bfi=(d_1-1,\dots,d_k-1)$ we set $\varepsilon_\bfi^J=1$ for any $J\subseteq[2,k]$. In order to find a decomposition of $W_{d_1}\otimes\cdots\otimes W_{d_k}$ following our procedure, we have to first find $T^J\in\spann(\mathcal C_{\bfd}^J)$ such that 
$$\prod_{j=1}^kd_jW_{d_1}\otimes\cdots\otimes W_{d_k}=\sum_{J\subseteq[2,k]}T^J.$$
The coordinates of the $T^J$'s are of the form
$$T^J=(\varepsilon_{\bfi}^J\alpha_s^J)_{\substack{0\leq s\leq d\\ \bfi\in\A_{\bfd,s}}}$$
and, by \Cref{prop: rango TJ}, it is enough to take $\alpha_{k-2}^J\neq0$ and $\alpha_{d-k}^J\neq 0$ for every $J$ to minimize the length of the decomposition. The easiest possible choice is given by \Cref{prop: mortale}: we take $\alpha_s=0$ for $s\in[0,d]\setminus\{k-2,d-k\}$, $\alpha_{k-2}=(-1)^{|J|}$ and $\alpha_{d-k}=2^{k-1}$. Hence, when we compute $\varphi^J(T^J)$ we only have two possibilities:
$$\varphi^J(T^J)=\begin{cases}
     \binom{d}{k-2}u^{d-k+2}v^{k-2}+2^{k-1}\binom{d}{d-k}u^kv^{d-k},&\text{ if $|J|$ is even}\\
     -\binom{d}{k-2}u^{d-k+2}v^{k-2}+2^{k-1}\binom{d}{d-k}u^kv^{d-k},&\text{ otherwise.}
\end{cases}$$
As a consequence, it is enough to apply the Sylvester's algorithm to only two binary forms. Once a minimal decomposition of the two possible $\varphi^J(T^J)$'s is found it is enough to apply $(\varphi^J)^{-1}$ for all the $J\subseteq[2,k]$ to find a partially symmetric decomposition of the product of $W$-states. Note that this drastically reduce the computational cost. Indeed, the decompositions of the two binary forms and the application of $(\varphi^J)^{-1}$ are computationally cheap. The core of the advantage of this method is that the combinatorics of the $\varepsilon_\bfi^J$'s allows to avoid many computations. We summarized the procedure in \Cref{algo}.
\end{remark}

We now want to focus on the partially symmetric border rank of $W_{d_1}\otimes \cdots \otimes W_{d_k}$. We remark that when $d_i=d$ for all $i$, the border rank of the $k$-fold tensor Kronecker
product of $(W_d)^{\boxtimes k}$ is $2^k$ \cite[Theorem 2]{zuiddam2017note}. Moreover, \cite[Corollary 5.3]{CCGI25} proves that the smoothable rank of $(W_d)^{\otimes k}$ is $2^k$ and we expect the border rank of $ (W_d)^{\otimes k}$ to be also $2^k$. In the next result we confirm this expectation for arbitrary degrees.

\begin{theorem}\label{theorem: bound rango bordo}
Let $k\geq 2$, let $\bfd\in \NN_{\geq 3}^k$ and let $d=d_1+\cdots+d_k$.  Then
$$
\underline{\rk}_\bfd(W_{d_1}\otimes\cdots\otimes W_{d_k})= 2^{k}.
$$
\end{theorem}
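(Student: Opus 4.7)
My plan is to prove the two bounds $\underline{\rk}_{\bfd}(T) \le 2^k$ and $\underline{\rk}_{\bfd}(T) \ge 2^k$ separately, where $T = W_{d_1}\otimes\cdots\otimes W_{d_k}$.

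For the upper bound I would exploit that each $W_{d_i}$ is a $2$-jet: denoting the standard basis of the $i$-th copy of $\CC^2$ by $e_1,e_2$,
\begin{equation*}
W_{d_i} \;=\; \lim_{t\to 0}\tfrac{1}{t}\bigl[(e_1+te_2)^{\otimes d_i}-e_1^{\otimes d_i}\bigr].
\end{equation*}
Taking the $k$-fold tensor product with a single parameter $t$ and distributing yields
\begin{equation*}
T \;=\; \lim_{t\to 0}\tfrac{1}{t^k}\sum_{S\subseteq[k]}(-1)^{k-|S|}\bigotimes_{i\in S}(e_1+te_2)^{\otimes d_i}\otimes\bigotimes_{i\notin S}e_1^{\otimes d_i},
\end{equation*}
which exhibits $T$ as a limit of sums of $2^k$ rank-one partially symmetric tensors, so $T\in\sigma_{2^k}(SV^{\bfd})$. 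Geometrically, this is just the statement that the $2$-hypercube $Z_k$ already introduced in \Cref{remark: W-state sta nel quadrello} is a product of smoothable length-$2$ schemes, hence is itself smoothable of total length $2^k$, and $T$ lies in its Segre-Veronese span.

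For the lower bound I use the chain $\underline{\rk}_{\bfd}(T) \ge \underline{\rk}(T) \ge \rk(\Phi(T))$, valid for any linear flattening $\Phi$, since matrix rank is lower-semicontinuous under limits. I would choose $\Phi$ to separate the first tensor factor of each $W_{d_i}$ from the remaining $d_i-1$ factors, yielding an isomorphism $(\CC^2)^{\otimes d}\cong(\CC^2)^{\otimes k}\otimes(\CC^2)^{\otimes(d-k)}$ and a matrix $\Phi(T)$ of size $2^k\times 2^{d-k}$. Since $T$ is a tensor product, $\Phi(T)$ equals the Kronecker product $\Phi_1(W_{d_1})\otimes\cdots\otimes\Phi_k(W_{d_k})$ of the analogous flattenings of the individual $W$-states. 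Writing $W_{d_i}=e_2\otimes e_1^{\otimes(d_i-1)}+e_1\otimes W'_{d_i-1}$ (where $W'_{d_i-1}$ is the $W$-state on the remaining $d_i-1$ factors), each $\Phi_i(W_{d_i})$ is a $2\times 2^{d_i-1}$ matrix whose two rows $e_1^{\otimes(d_i-1)}$ and $W'_{d_i-1}$ are manifestly linearly independent, so $\rk\Phi_i(W_{d_i})=2$. By multiplicativity of rank under Kronecker product, $\rk\Phi(T)=2^k$, and the two bounds match to give $\underline{\rk}_{\bfd}(T)=2^k$.

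The only step requiring any real care is the identification of the global flattening $\Phi(T)$ with the Kronecker product of the local flattenings $\Phi_i(W_{d_i})$; this is a routine bookkeeping verification once the flattenings are chosen coherently on each $W_{d_i}$, so I do not expect a serious obstacle.
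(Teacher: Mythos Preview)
Your proof is correct. Both bounds are established by valid arguments, and the identification of the global flattening with the Kronecker product of the local ones is indeed routine once the splitting $(\CC^2)^{\otimes d_i}\cong\CC^2\otimes(\CC^2)^{\otimes(d_i-1)}$ is fixed consistently.

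Your approach differs from the paper's in both halves, though the underlying ideas are close. For the upper bound, the paper does not give an explicit limit but relies on the smoothability of the scheme $Z_k$ (a product of length-$2$ schemes, hence smoothable of length $2^k$) discussed before the theorem and in \cite{CCGI25}; your formula is precisely the concrete one-parameter smoothing of that product scheme, so the content is the same but your presentation is self-contained. For the lower bound, the paper stays inside the partially symmetric world: it uses the catalecticant flattening $\sym^2\CC^2\otimes\cdots\otimes\sym^2\CC^2\to\sym^{d_1-2}(\CC^2)^*\otimes\cdots\otimes\sym^{d_k-2}(\CC^2)^*$ and computes its rank via apolarity, exhibiting $2^k$ independent degree-$(2,\dots,2)$ monomials outside the apolar ideal. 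You instead pass through the inequality $\underline{\rk}_{\bfd}\ge\underline{\rk}$ and use an ordinary (non-symmetric) flattening together with multiplicativity of matrix rank under Kronecker product. Your route is more elementary, avoiding apolarity entirely; the paper's route has the minor aesthetic advantage of bounding $\underline{\rk}_{\bfd}$ directly without the detour through tensor border rank, and of using a flattening that respects the partial symmetry.
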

\begin{proof}
We use the catalecticant bound for border rank of \cite[Corollary 5.5]{gal}. In order to do that we have to produce a partially symmetric flattening with rank at least $2^k$. Let us consider the following flattening of $W_{d_1}\otimes \cdots \otimes W_{d_k}$:
\[
F %_{(2,d_1-2),\dots, (2,d_2-2)}
: \sym^2(\mathbb{C}^2) \otimes \dots \otimes \sym^2(\mathbb{C}^2) \rightarrow 
\sym^{d_1-2}(\mathbb{C}^2)^* \otimes \dots \otimes \sym^{d_k-2}(\mathbb{C}^2)^*.
\]
Let $\mathbb{C}[\alpha_{1,0},\alpha_{1,1},\dots,\alpha_{k,0},\alpha_{k,1}]$ be the ring of multigraded derivations on $\CC[x_{1,0},x_{1,1},\dots,x_{k,0},x_{k,1}]$ with $\alpha_{i,j}$ defined as the dual of $x_{i,j}$. By apolarity theory, the rank of the linear map $F$ %_{(2,d_1-2),\dots, (2,d_2-2)}$ 
is exactly the dimension of the degree $(2,\dots,2)$ component of the quotient of the multigraded ring $\mathbb{C}[\alpha_{1,0},\alpha_{1,1},\dots,\alpha_{k,0},\alpha_{k,1}]$ by the ideal $I_{d}$ apolar to the W-state, i.e. $I_{d}=(\alpha_{1,0}^2,\dots,\alpha_{k,0}^2,\alpha_{1,1}^{d_1},\dots,\alpha_{k,1}^{d_k})$. For $\varepsilon_i \in \{0,1\}$ the $2^k$ forms
$$F_{\varepsilon_1,\dots,\varepsilon_{k}}=\alpha_{1,0}^{\varepsilon_1}\alpha_{1,1}^{2-\varepsilon_1} \dots \alpha_{k,0}^{\varepsilon_k}\alpha_{k,1}^{2-\varepsilon_k}$$  are linear independent of multidegree $(2,\dots, 2)$, with $F_{\varepsilon_1,\dots,\varepsilon_{k}} \notin I_d$. This concludes the proof. 
\end{proof}

\begin{algorithm}[h]
\caption{A decomposition of $W_{d_1}\otimes \cdots \otimes W_{d_k}$ given by \Cref{theorem: bound rango}. }
\flushleft{
\textbf{Input:} }  The integers $k\geq 2$, $d_1,\dots,d_k\geq 3$.
%\begin{itemize}
%    \item  The integers $k\geq 2$, $d_1,\dots,d_k\geq 3$.
%\end{itemize}
\flushleft{
\textbf{Output:} A decomposition of $W_{d_1}\otimes \cdots \otimes W_{d_k}$ of length $2^{k-1}(d-2k+2)$.
}

\flushleft{
\textbf{Procedure:}
}\\
\begin{enumerate}[label=\it{Step \arabic*.},ref=\it{Step \arabic*.}]
\item %[{\it Step 1.}] 
   For $s\in[0,d]\setminus\{d-k\}$ choose one $\bf i \in \A_{\bf d,s }$ and set $\varepsilon^J_{\bf i}=1$ for any $J\subseteq[2,k]$. For $s=d-k$ and $\bfi=(d_1-1,\dots,d_k-1)$ set $\varepsilon_\bfi^J=1$ for any $J\subseteq[2,k]$.

    \item %[{\it Step 2.}] 
 Set $F=\binom{d}{k-2}u^{d-k+2}v^{k-2}+2^{k-1}\binom{d}{d-k}u^kv^{d-k}$ and $G=-\binom{d}{k-2}u^{d-k+2}v^{k-2}+2^{k-1}\binom{d}{d-k}u^kv^{d-k}$.

    \item 
        Apply Sylvester's algorithm to $F$ and $G$ to compute minimal decompositions $A_F$ of $F$ and $A_G$ of $G$. 
    
     \item%[{\it Step 4.}]
     \label{step 4}
    For $J\subseteq [2,k] $ do the following:
    \begin{itemize}
        \item if $|J|$ is even, apply $(\varphi^J)^{-1}$ to each element of $A_F$,
        \item if $|J|$ is odd, apply $(\varphi^J)^{-1}$ to each element of $A_G$.
    \end{itemize}

    \item[]  \textbf{Return} the sum of the preimages computed in \ref{step 4}
\end{enumerate}

\label{algo}
\end{algorithm}
\subsection{Further remarks on \Cref{theorem: bound rango} %and future directions
}\label{subsection: commenti sul bound}

As already emphasized, the general idea of bounding the rank of tensor product of $W$-states by looking at the containment of $W_{d_1}\otimes \cdots \otimes W_{d_k}$ in the span of the union of rational normal curves already appeared in \cite[Theorem 3.6]{BBCG}. However, our approach differs in two parts.

First, we choose a smaller number of rational curves, i.e. $2^{k-1}$ curves instead of $2^{k}$, all of multidegree $(1,\dots,1) $ in $(\PP^1)^{\times k}$. The downside of our choice of curves is that after the Segre-Veronese embedding they all result to be rational normal curves of degree $d=d_1+\cdots+d_k $, which is higher than the degrees appearing in  \cite[Theorem 3.6]{BBCG}.

Second, we are able to explicitly characterize the tensors $T^J \in \mathrm{span} (\mathcal{C}_{\textbf{d}}^J)$ giving the decomposition of $W_{d_1} \otimes \dots \otimes W_{d_k}$ seen as an element of $\spann\{ \cup_{J}\Cc_{\bf d}^J\} $ and we actually compute the Waring rank of each $T^J$.

We remark that the general idea of finding a curve $\mathcal C\subset\PP^1\times\cdots\times\PP^1$ such that $Z_k\subset \mathcal C$ and then estimating the rank by estimating the partially symmetric rank of $W_{d_1}\otimes\cdots \otimes W_{d_k}$ using the fact that $W_{d_1}\otimes\cdots \otimes W_{d_k}\in\spann (sv_{\bfd}(\mathcal C))$, can in principle be applied to any curve $\mathcal C$ containing $Z_k$. However, the only curves  for which we know how to effectively calculate the rank of a point in their span are rational normal curves. This is the reason why we choose $\mathcal C$ such that $sv_\bfd(\mathcal C)$ is a union of rational normal curves.

In the following, we clarify how our approach can essentially be considered optimal in the sense that, very reasonably, it is the best bound achievable by using the technique introduced in \cite{BBCG}.
\begin{remark}

Since $Z_k$ is a 2-symmetric scheme, see \cite[Definition 2.1]{CCGI25}, by \cite[Proposition 2.18]{CCGI25}, any smooth curve passing through its support intersects $Z_k$ in a 0-dimensional scheme of length 2. In particular, it follows that $\mathcal C$ must have at least $2^{k-1}$ irreducible components which are rational normal curves passing through the support of $Z_k$. We used curves having multidegree $(1,\dots,1)$, which are embedded in rational normal curves of degree $d$ via the Segre-Veronese map. \Cref{prop: rango TJ}, together with the fact that we have to consider at least $2^{k-1}$ curves, shows that our bound is the best achievable with these curves.

Since the generic rank of a tensor in the span of a rational normal curve increases with the degree of the curve, the most reasonable possibility to try to improve the bound is to use curves having some $0$'s in their multidegree. However, using the curves considered in \cite{BBCG}, which have some 0's in their multidegree, and computing the rank of the analogous of our $T^J$'s arising from this procedure, one finds a higher bound. For instance, using these curves for $W_3\otimes W_3$ gives $\rk_{3,3}(W_3\otimes W_3)\leq 9$, while our bound gives $\rk_{3,3}(W_3\otimes W_3)\leq 8$ and it is known that $\rk_{3,3}(W_3\otimes W_3)=8$.
\end{remark}
In principle, curves of higher multidegree could provide a better bound. However, obtaining an effective bound would require the corresponding points in the span of such curves to be very special, since the generic rank along such a high-degree curve is itself large, and it seems unlikely in our setting.

\bibliographystyle{alpha}
\bibliography{references.bib}
\Addresses

\end{document}